\definecolor{darkred}{rgb}{1,0,0} 
\definecolor{darkgreen}{rgb}{0,0.8,0}
\definecolor{darkblue}{rgb}{0,0,1}
\newcommand{\labell}[1] {\label{#1}}
\numberwithin{equation}{section}
\newtheorem {Theorem}{Theorem}
\numberwithin{Theorem}{section}
\newtheorem {Lemma}[Theorem]    {Lemma}
\theoremstyle{definition}
\theoremstyle{remark}
\newtheorem{Remark}[Theorem]{Remark}
\newtheorem{Example}[Theorem]{Example}
\chardef\csname pre amssym.def at\endcsname=\the\catcode`\@
\def\undefine#1{\let#1\undefined}
\def\newsymbol#1#2#3#4#5{\let\next@\relax
 \ifnum#2=\@ne\let\next@\msafam@\else
 \ifnum#2=\tw@\let\next@\msbfam@\fi\fi
 \mathchardef#1="#3\next@#4#5}
\def\mathhexbox@#1#2#3{\relax
 \ifmmode\mathpalette{}{\m@th\mathchar"#1#2#3}%
 \else\leavevmode\hbox{$\m@th\mathchar"#1#2#3$}\fi}
\def\hexnumber@#1{\ifcase#1 0\or 1\or 2\or 3\or 4\or 5\or 6\or 7\or 8\or
 9\or A\or B\or C\or D\or E\or F\fi}
\font\teneufm=eufm10
\font\seveneufm=eufm7
\font\fiveeufm=eufm5
\def    \eps    {\epsilon}
\newcommand{\CA}{{\mathcal A}}
\newcommand{\CM}{{\mathcal M}}
\newcommand{\CS}{{\mathcal S}}
\newcommand{\id}{{\mathit id}}
\newcommand{\pt}{{\mathit pt}}
\newcommand{\const}{{\mathit const}}
\newcommand{\ta}{\tilde{a}}
\newcommand{\ka}{{\kappa}}
\newcommand{\tH}{\tilde{H}}
\newcommand{\tA}{\tilde{\mathcal A}}
\newcommand{\PP}{{\mathcal P}}
\newcommand{\bPP}{\bar{\mathcal P}}
\def    \nat    {{\natural}}
\def    \F      {{\mathbb F}}
\def    \C      {{\mathbb C}}
\def    \R      {{\mathbb R}}
\def    \Z      {{\mathbb Z}}
\def    \T      {{\mathbb T}}
\def    \CP     {{\mathbb C}{\mathbb P}}
\def    \12    {{\frac{1}{2}}}
\def    \p      {\partial}
\def    \codim  {\operatorname{codim}}
\def    \Area  {\operatorname{Area}}
\def    \HF     {\operatorname{HF}}
\def    \HQ     {\operatorname{HQ}}
\def    \Gr     {\operatorname{Gr}}
\def    \H     {\operatorname{H}}
\def    \CF     {\operatorname{CF}}
\def    \bPP     {\bar{\mathcal{P}}}
\def    \bx     {\bar{x}}
\def    \by     {\bar{y}}
\def    \bg    {\bar{\gamma}}
\def    \MUCZ  {\operatorname{\mu_{\scriptscriptstyle{CZ}}}}
\begin{document}


\setlength{\smallskipamount}{6pt}
\setlength{\medskipamount}{10pt}
\setlength{\bigskipamount}{16pt}





\title[Hyperbolic Fixed Points]{Hyperbolic Fixed Points and Periodic
  Orbits of Hamiltonian Diffeomorphisms}

\author[Viktor Ginzburg]{Viktor L. Ginzburg}
\author[Ba\c sak G\"urel]{Ba\c sak Z. G\"urel}

\address{BG: Department of Mathematics, Vanderbilt University,
  Nashville, TN 37240, USA} 
\email{basak.gurel@vanderbilt.edu}

\address{VG: Department of Mathematics, UC Santa Cruz, Santa Cruz, CA
  95064, USA} \email{ginzburg@ucsc.edu}

\subjclass[2010]{53D40, 37J10, 70H12} 
\keywords{Periodic orbits, hyperbolic fixed points, Hamiltonian
  diffeomorphisms, Floer and quantum (co)homology, Conley conjecture}

\date{\today} 

\thanks{The work is partially supported by NSF grants DMS-0906204 (BG)
  and DMS-1007149 (VG) and by the faculty research funds of the
  University of California, Santa Cruz.}

\bigskip

\begin{abstract} We prove that for a certain class of closed monotone
  symplectic manifolds any Hamiltonian diffeomorphism with a
  hyperbolic fixed point must necessarily have infinitely many
  periodic orbits. Among the manifolds in this class are complex
  projective spaces, some Grassmannians, and also certain 
  product manifolds such as the product of a projective space with a
  symplectically aspherical manifold of low dimension. 

  A key to the proof of this theorem is the fact that the energy
  required for a Floer connecting trajectory to approach an iterated
  hyperbolic orbit and cross its fixed neighborhood is bounded away
  from zero by a constant independent of the order of iteration. This
  result, combined with certain properties of the quantum product
  specific to the above class of manifolds, implies the existence of
  infinitely many periodic orbits.
\end{abstract}

\maketitle

\tableofcontents

\newpage 

\section{Introduction and the main result}
\labell{sec:main-results}

\subsection{Introduction}
\label{sec:intro}
The main result of the paper is that for a certain class of closed
monotone symplectic manifolds any Hamiltonian diffeomorphism with a
hyperbolic fixed or periodic point must necessarily have infinitely
many periodic orbits. This class of manifolds includes complex
projective spaces, some Grassmannians, and also certain product
manifolds such as the product of a projective space with a
symplectically aspherical manifold of low dimension.

To put this result in perspective, recall that, for many closed
symplectic manifolds, every Hamiltonian diffeomorphism has infinitely
many periodic orbits and, in fact, periodic orbits of arbitrarily
large (prime) period whenever the fixed points are isolated. This
unconditional existence of infinitely many periodic orbits is often
referred to as the Conley conjecture. As of this writing, the Conley
conjecture has been shown to hold for all symplectic manifolds $M$
with $c_1(TM)\mid_{\pi_2(M)}=0$ and also for negative monotone
manifolds; see \cite{CGG,GG:gaps,He:irr} and also
\cite{FH,Gi:CC,GG:neg-mon, Hi,LeC,SZ} for some important, but less
general, results.  Ultimately, one can expect the Conley conjecture to
hold for most symplectic manifolds.

There are, however, notable exceptions. The simplest one is $S^2$: an
irrational rotation of $S^2$ about the $z$-axis has only two periodic
orbits, which are also the fixed points; these are the poles. In fact,
any manifold that admits a Hamiltonian torus action with isolated
fixed points also admits a Hamiltonian diffeomorphism with finitely
many periodic orbits. For instance, such a diffeomorphism is generated
by a generic element of the torus. In particular, complex projective spaces,
the Grassmannians, and, more generally, most of the coadjoint
orbits of compact Lie groups as well as symplectic toric
manifolds all admit Hamiltonian diffeomorphisms with finitely many
periodic orbits.

An analogue of the Conley conjecture applicable to such manifolds is
the conjecture that a Hamiltonian diffeomorphism with ``more than
necessary'' fixed points has infinitely many periodic orbits. Here
``more than necessary'' is left deliberately vague although, from the
authors' perspective, it should be interpreted as a lower bound
arising from some version of the Arnold conjecture. For $\CP^n$, the
expected threshold is $n+1$. This variant of the Conley conjecture is
inspired by a celebrated theorem of Franks asserting that a
Hamiltonian diffeomorphism of $S^2$ with at least three fixed points
must have infinitely many periodic orbits, \cite{Fr1,Fr2}; see also
\cite{FH,LeC} for further refinements and \cite{BH,CKRTZ,Ke:JMD} for
symplectic topological proofs. (It is worth pointing out that Franks'
theorem holds for area preserving homeomorphisms. However, the
discussion of possible generalizations of this stronger result to
higher dimensions is far outside the scope of this paper.)

There are few results directly supporting this conjecture in
dimensions greater than two. Some evidence is provided by the results
of \cite{Gu}, where a ``local version'' of the conjecture is
considered. The main theorem of the present paper can also be viewed
as supporting the conjecture. In fact, the conjecture, at least for
non-degenerate Hamiltonian diffeomorphisms of $\CP^n$ and some other
manifolds, would follow if we could replace a hyperbolic fixed point
by a non-elliptic fixed point in the main theorem. (Of course, our
main result implies the non-degenerate case of Franks' theorem, but in
general it appears to add little to what is already known in dimension
two.)

Hyperbolicity is central to the proof of the theorem. The argument
hinges on a result, perhaps of independent interest, asserting 
that the energy required for a Floer connecting trajectory of an
iterated Hamiltonian to approach a hyperbolic orbit and cross its
fixed neighborhood cannot be arbitrarily small: it is bounded away
from zero by a constant independent of the order of iteration.  This
is an exclusive feature of hyperbolic fixed points.

\subsection{Main theorem}
\label{sec:main-result}
Consider a closed monotone symplectic manifold $M^{2n}$ with minimal
Chern number $N$. We denote by $\HQ_*(M)$ the quantum homology ring of
$M$, for the sake of simplicity taken over $\F=\Z_2$, and by $q$ the
generator of the Novikov ring $\Lambda=\F[q,q^{-1}]$, normalized to
have degree $|q|=-2N$. We refer the reader to Section \ref{sec:prelim}
for a detailed discussion of our conventions and notation.

The main result of the paper is

\begin{Theorem}
\label{thm:main}
Let $M$ be a closed strictly monotone symplectic manifold of dimension $2n$
(i.e., $M$ is monotone and $c_1(TM)\mid_{\pi_2(M)}\neq 0$ and
$[\omega]\mid_{\pi_2(M)}\neq 0$) such that
$N\geq n/2+1$. Assume that 
\begin{equation}
\label{eq:main-hom-relation}
\beta *\alpha=q^\nu [M]
\end{equation}
in $\HQ_*(M)$ for some ordinary homology classes $\alpha\in \H_*(M)$
and $\beta\in \H_*(M)$ with $|\alpha|<2n$ and $|\beta|<2n$, and that
\begin{itemize}
\item[(i)] $\nu=1$ or
\item[(ii)] $|\alpha|\geq 3n+1-2N$.
\end{itemize}
Then any Hamiltonian diffeomorphism $\varphi_H$ of $M$ with a contractible
hyperbolic periodic orbit $\gamma$ has infinitely many periodic orbits. 
\end{Theorem}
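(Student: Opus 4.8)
The strategy is to argue by contradiction: assume $\varphi_H$ has finitely many periodic orbits, all of which are then (after passing to iterates) isolated and ``nice,'' with one of them iterating a hyperbolic periodic orbit $\gamma$. Fix an isolating neighborhood $U$ of $\gamma$ and replace $H$ by its iterate $H^{\natural k}$ so that $\gamma^k$ becomes a fixed point; the key point will be that the relevant energy bound for Floer trajectories entering and crossing $U$ is \emph{uniform} in $k$ (this is the ``crossing energy theorem'' advertised in the introduction, which I assume is established earlier in the paper). Concretely, I would first set up the filtered Floer homology $\HF^{(a,\,b)}_*(H^{\natural k})$ and the action-carrying data of the quantum-homology relation \eqref{eq:main-hom-relation}: the classes $\alpha$ and $\beta$ represent, via the PSS isomorphism, Floer homology classes of fixed action $0$, and the relation $\beta*\alpha = q^\nu[M]$ translates into a pair-of-pants product identity in $\HF_*(H^{\natural k})$ realizing the fundamental class $[M]$ with an action shift governed by $\nu$, $N$, and the monotonicity constant.

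Next I would localize this product relation at the orbit $\gamma^k$. The idea is that $[M] = \operatorname{PSS}^{-1}$ of the top class is detected by a unique maximal-action generator, and the product factoring $[M]$ forces a Floer continuation/product trajectory to pass through (an action window around) $\gamma^k$. Using the uniform crossing-energy bound, the segment of this trajectory that crosses $U$ costs at least a fixed energy $\eps_0 > 0$ independent of $k$. On the other hand, the \emph{total} action window available for the product $\beta*\alpha = q^\nu[M]$, computed from \eqref{eq:main-hom-relation}, grows only linearly with $k$ in a controlled way — and here the numerical hypotheses $N \ge n/2 + 1$ together with (i) $\nu = 1$ or (ii) $|\alpha| \ge 3n+1-2N$ are exactly what is needed to make the degree/action bookkeeping close up, i.e., to guarantee that $\gamma^k$ (rather than some other periodic orbit) must carry the relevant homology and that the action budget is consumed by the crossing. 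Pushing $k \to \infty$, the finitely many orbits can supply only boundedly much ``non-crossing'' action in each fixed-length window, so for large $k$ there is not enough room to accommodate the forced crossing of cost $\eps_0$ — a contradiction. Hence $\varphi_H$ has infinitely many periodic orbits.

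The main obstacle, and the step requiring the most care, is the middle one: showing that the quantum relation \eqref{eq:main-hom-relation} genuinely forces a Floer trajectory to interact with the hyperbolic orbit $\gamma^k$ in a way that the crossing-energy bound applies, and doing the action/grading accounting uniformly in $k$. This is where one must combine (a) the structure of the quantum product and the PSS isomorphism for strictly monotone $M$, (b) the fact that for large $k$ the local Floer homology of $\gamma^k$ behaves in the controlled manner dictated by hyperbolicity (supported in a single degree, with bounded total dimension, by the iteration properties of hyperbolic orbits), and (c) the precise inequalities on $N$, $n$, $|\alpha|$, $|\beta|$, and $\nu$. The strict monotonicity hypothesis ($[\omega]|_{\pi_2} \ne 0$ and $c_1|_{\pi_2} \ne 0$) is used to ensure the Novikov/action filtration is non-trivial and that the degree shift by $q$ is genuinely an action shift, so that ``large $k$'' really does open a large action window in which the argument can be run. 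Once these pieces are in place, the contradiction is a counting argument: finitely many orbits cannot feed an unbounded demand for crossing energy.
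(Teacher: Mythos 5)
Your proposal captures the high-level shape of the argument (assume finitely many orbits, apply a $\kappa$-uniform crossing-energy bound for the hyperbolic orbit, and play this against the quantum/cap-product relation $\beta*\alpha=q^\nu[M]$), and it correctly identifies the crossing-energy bound as the heart of the matter. But it misses the one step that actually makes the argument close, and the final contradiction you propose is not the right one.

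The gap: you write that ``pushing $k\to\infty$... for large $k$ there is not enough room to accommodate the forced crossing of cost $\eps_0$.'' This is not how the paper's argument works, and as stated it would not succeed. The crossing energy $c_\infty$ is bounded \emph{below} and need not grow with $\kappa$ (the paper explicitly notes it cannot assume $c_\infty\to\infty$), while the action shift $-\nu\lambda_0$ induced by $q^\nu$ is fixed; there is no competing quantity that shrinks with $\kappa$ to create a pigeonhole contradiction. What is actually needed --- and what you do not supply --- is a \emph{choice} of the iteration $\kappa$, made via the Kronecker simultaneous-approximation theorem together with the homogeneity of action and augmented action under iteration: one picks $\kappa$ so that for \emph{every} periodic orbit of $H$ the action mod $\lambda_0$ of the $\kappa$th iterate is $\eps$-close to $0$, while the augmented actions are either $0$ or larger than a prescribed constant $C$ in absolute value. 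Once the whole action spectrum of $H^{\nat\kappa}$ is forced into this $\eps$-neighborhood of $\lambda_0\Z$, the cap product with $\alpha$ and $\beta$, combined with the crossing-energy bound, produces an intermediate orbit $\by$ whose action lies in $(-\nu\lambda_0+\eps,\,-\eps)$ --- a band that is \emph{disjoint} from the permitted $\eps$-neighborhood of $\lambda_0\Z$ precisely when $\nu=1$ (Case (i)) or by the mean-index estimate under (ii). That is the contradiction. Without the Kronecker step, one cannot locate the forbidden band, and the numerical hypotheses $N\geq n/2+1$, $\nu=1$ or $|\alpha|\geq 3n+1-2N$ have nothing to grab onto.

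Two further points of divergence from the paper. First, you suggest the product relation ``forces a trajectory to pass through'' $\gamma^\kappa$, detected by a ``unique maximal-action generator'' representing $[M]$. The paper instead starts directly from the cycle $\bg$ in filtered Floer homology, shows (using the crossing-energy bound and the $N\geq n/2+1$ index estimate) that it is closed and nontrivial in a window $(-C',C')$ around $0$, applies $\Phi_\alpha$ and $\Phi_\beta$ with $\Phi_\beta\Phi_\alpha=\Phi_{q^\nu[M]}=q^\nu\cdot\mathrm{id}$, and reads off the existence of $\by$. Second, you invoke local Floer homology of $\gamma^\kappa$; the paper's argument does not need it and does not use it --- the hyperbolicity is used only through the crossing-energy theorem. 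You would need to supply the Kronecker/augmented-action argument and replace the ``not enough room'' heuristic with the forbidden-band contradiction to turn your sketch into a correct proof.
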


\begin{Remark}[Non-contractible Orbits]
\label{rmk:non-contr}
The contractibility assumption on the orbit $\gamma$ can be dropped
once the monotonicity requirement is suitably adjusted. To be more
precise, assume that $M$ is toroidally monotone in addition to the
conditions of the theorem. (See Section \ref{sec:non-contr} for the
definition.) Then any Hamiltonian diffeomorphism of $M$ with a
hyperbolic periodic orbit $\gamma$ (not necessarily contractible) has
infinitely many periodic orbits. These orbits lie in the collection of
free homotopy classes formed by the iterations of $\gamma$. The proof
of this fact is essentially identical to the proof of the theorem; see
Remark \ref{rmk:non-contr-pf}.
\end{Remark}

Among the manifolds meeting the requirements of Theorem \ref{thm:main}
are complex projective spaces $\CP^n$, complex Grassmannians
$\Gr(2,N)$, $\Gr(3,6)$ and $\Gr(3,7)$, the products $\CP^m\times
P^{2d}$ and $\Gr(2,N)\times P^{2d}$ where $P$ is symplectically
aspherical and $d+2 \leq m$ in the former case and $d\leq 2$ in the
latter, and monotone products $\CP^m\times\CP^m$. These manifolds also
meet the requirements of the non-contractible version of the theorem
(Remark \ref{rmk:non-contr}), provided that the products are
toroidally monotone. (We do not have any example of a manifold with
$N\geq n/2+1$ satisfying the conditions of Case (ii) of the
theorem, but not of Case (i).)

\begin{Remark}
\label{rmk:period}
We emphasize that in Theorem \ref{thm:main} we do not make any
non-degeneracy assumptions on $\varphi_H$. Note also that, in contrast
with the Conley conjecture type results discussed above, we do not claim the
existence of periodic orbits with arbitrarily large period. 
\end{Remark}

\subsection{Discussion} As was mentioned in the introduction, a key to
the proof of Theorem \ref{thm:main} is a lower bound $c_\infty>0$,
independent of the order of iterations, on the energy that a Floer
trajectory of an iterated Hamiltonian requires to approach a
hyperbolic orbit and cross its fixed neighborhood. (The assumption
that the orbit is hyperbolic is essential here; see Remark
\ref{rmk:elliptic}.) This is Theorem \ref{thm:energy}, which is a
purely local result. Its proof relies on the Gromov compactness
theorem in the form established in~\cite{Fi}.

It is quite likely that the hypotheses of Theorem \ref{thm:main} can
be relaxed via, for instance, further refining Theorem
\ref{thm:energy}. (See also Remark \ref{rmk:refinement}.)
Hypothetically, a result similar to Theorem \ref{thm:main} should hold
for monotone manifolds without any restrictions on the minimal Chern
number $N$ or even for weakly monotone and rational symplectic
manifolds. Moreover, we expect the periodic orbits to enter an
arbitrarily small neighborhood of the hyperbolic orbit.  Note,
however, that the algebraic requirement \eqref{eq:main-hom-relation},
or some variant of it, is central to the proof.

In the context of Hamiltonian dynamical systems, the presence of one
hyperbolic orbit implies, $C^1$-generically, the existence of
transverse homoclinic points via the so-called connecting lemma; see
\cite{Ha,Xi}. (The genericity assumption is essential here, although
hypothetically this could be a $C^\infty$-generic condition rather
than $C^1$.) The existence of transverse homoclinic points has, in
turn, rich dynamical consequences among which is the
existence of infinitely many periodic orbits; see, e.g., \cite{Ru,Ze}
and references therein. Thus, under certain additional conditions on
the ambient manifold, Theorem \ref{thm:main} recovers a fraction of
this dynamics, but does this unconditionally rather than
generically. Note also that the existence of infinitely many periodic
orbits is a $C^1$-generic phenomenon, as follows from the closing
lemma (see \cite{PR}), and in many instances even $C^\infty$-generic
(see~\cite{GG:generic}).

Finally, note that one can expect an analogue of Theorem
\ref{thm:energy} to hold for hyperbolic periodic orbits of Reeb flows
and have applications, similar to Theorem \ref{thm:main}, to the
existence of infinitely many closed Reeb orbits; we will consider
this circle of questions elsewhere.

\subsection{Acknowledgements} The authors are grateful to Peter
Albers, Paul Biran, Anton Gorodetski, Helmut Hofer, Yng-Ing Lee, Yaron
Ostrover, Yasha Pesin, Dietmar Salamon, and Michael Usher for useful
discussions.  A part of this work was carried out while both of the
authors were visiting the IAS as a part of the Symplectic Dynamics
program and also during the first author's visits to the NCTS, NCKU,
Taiwan, and the FIM, ETH, Z\"urich. The authors would like to thank
these institutes for their warm hospitality and support.

\section{Preliminaries}
\label{sec:prelim}

The goal of this section is to set notation and conventions, following
mainly \cite{GG:gaps}, and to give a brief review of Floer
homology and several other notions used in the paper.

\subsection{Conventions and notation}
\label{sec:conventions}
Let $(M^{2n},\omega)$ be a closed symplectic manifold, which in this
paper (except Section \ref{sec:hyperbolic}) is assumed to be
\emph{monotone}, i.e., $[\omega]\mid_{\pi_2(M)}=\lambda
c_1(TM)\!\mid_{\pi_2(M)}$ for some $\lambda\geq 0$. Furthermore, we
focus on the case where $\lambda\neq 0$ and
$c_1(TM)\!\mid_{\pi_2(M)}\neq 0$ and refer to such manifolds as
\emph{strictly monotone}. (Note a somewhat unconventional position of
the \emph{monotonicity constant} $\lambda$ in the above definition.)
The positive generator $\lambda_0$ of the group
$\left<[\omega],{\pi_2(M)}\right>\subset\R$ formed by the integrals of
$\omega$ over the spheres is called the \emph{rationality
  constant}. Likewise, the \emph{minimal Chern number} $N$ is the positive
generator of the group
$\left<c_1(TM),{\pi_2(M)}\right>\subset\Z$. Clearly,
$\lambda_0=\lambda N$.

All Hamiltonians $H$ considered in this paper are assumed to be
$\ka$-periodic in time (i.e., $H$ is a function $S^1_\ka\times
M\to\R$, where $S^1_\ka=\R/\kappa\Z$) and the period $\ka$ is always a
positive integer.  When the period $\ka$ is not specified, it is equal
to one, and $S^1=\R/\Z$. We set $H_t = H(t,\cdot)$ for $t\in
S^1_\ka$. The Hamiltonian vector field $X_H$ of $H$ is defined by
$i_{X_H}\omega=-dH$. The (time-dependent) flow of $X_H$ will be
denoted by $\varphi_H^t$ and its time-one map by $\varphi_H$. Such
time-one maps are referred to as \emph{Hamiltonian diffeomorphisms}.
A one-periodic Hamiltonian $H$ can also be treated as
$\ka$-periodic. In this case, we will use the notation $H^{\nat \ka}$
and, abusing terminology, call $H^{\nat \ka}$ the $\ka$th iteration of
$H$.

Let $K$ and $H$ be one-periodic Hamiltonians such that $K_1=H_0$ and
$H_1=K_0$. We denote by $K\nat H$ the two-periodic Hamiltonian equal
to $K_t$ for $t\in [0,\,1]$ and $H_{t-1}$ for $t\in [1,\,2]$. Thus
$H^{\nat \ka}=H\nat \cdots\nat H$ ($\ka$ times).

Let $x\colon S^1_\ka\to M$ be a contractible loop. A \emph{capping} of
$x$ is a map $u\colon D^2\to M$ such that $u\mid_{S^1_\ka}=x$. Two
cappings $u$ and $v$ of $x$ are considered to be equivalent if the
integral of $c_1(TM)$ (and hence of $\omega$) over the sphere obtained
by attaching $u$ to $v$ is equal to zero. A capped closed curve
$\bar{x}$ is, by definition, a closed curve $x$ equipped with an
equivalence class of cappings. In what follows, the presence of
capping is always indicated by the bar.  We denote the collection of
capped one-periodic orbits of $H$ by $\bPP(H)$.

The action of a one-periodic Hamiltonian $H$ on a capped loop
$\bar{x}=(x,u)$ is defined by
$$
\CA_H(\bar{x})=-\int_u\omega+\int_{S^1} H_t(x(t))\,dt.
$$
The space of capped closed curves is a covering space of the space of
contractible loops, and the critical points of $\CA_H$ on this covering
space are exactly capped one-periodic orbits of $X_H$. The
\emph{action spectrum} $\CS(H)$ of $H$ is the set of critical values
of $\CA_H$. This is a zero measure set; see, e.g., \cite{HZ}. When $M$
is monotone as we are assuming here, $\CS(H)$ is closed, and hence
nowhere dense.  These definitions extend to $\ka$-periodic orbits and
Hamiltonians in the obvious way. Clearly, the action functional is
homogeneous with respect to iteration:
\begin{equation}
\label{eq:action-hom}
\CA_{H^{\nat \ka}}(\bx^\ka)=\ka\CA_H(\bx).
\end{equation}
Here $\bx^\ka$ stands for the $\ka$th iteration of the capped orbit $\bx$.

Throughout most of the paper, \emph{a periodic orbit is assumed
  to be contractible, unless explicitly stated otherwise}. More
specifically, we consider non-contractible orbits in Remarks
\ref{rmk:non-contr} and \ref{rmk:non-contr-pf}, and in Sections
\ref{sec:non-contr} and \ref{sec:hyperbolic}.

A periodic orbit $x$ of $H$ is said to be \emph{non-degenerate} if the
linearized return map $d\varphi_H \colon T_{x(0)}M\to T_{x(0)}M$ has
no eigenvalues equal to one, and $x$ is called \emph{hyperbolic} if
none of the eigenvalues has absolute value one.  A Hamiltonian is
non-degenerate if all its one-periodic orbits are non-degenerate.

Let $\bar{x}$ be a non-degenerate (capped) periodic orbit.  The
\emph{Conley--Zehnder index} $\MUCZ(\bar{x})\in\Z$ is defined, up to a
sign, as in \cite{Sa,SZ}. More specifically, in this paper, the
Conley--Zehnder index is the negative of that in \cite{Sa}. In other
words, we normalize $\MUCZ$ so that $\MUCZ(\bar{x})=n$ when $x$ is a
non-degenerate maximum (with trivial capping) of an autonomous
Hamiltonian with small Hessian. The \emph{mean index}
$\Delta_H(\bx)\in\R$ measures, roughly speaking, the total angle swept
by certain eigenvalues with absolute value one of the linearized flow
$d\varphi^t_H$ along $x$ with respect to the trivialization associated
with the capping; see \cite{Lo,SZ}. (Sometimes we use the notation
$\Delta(\bx)$ when the Hamiltonian is clear from the context.) The
mean index is defined regardless of whether $x$ is degenerate or not,
and $\Delta_H(\bx)$ depends continuously on $H$ and $\bx$ in the
obvious sense. When $x$ is non-degenerate, we have
\begin{equation}
\label{eq:Delta-MUCZ}
0<|\Delta_H(\bx)-\MUCZ(H,\bx)|<n.
\end{equation}
Furthermore, the mean index is also homogeneous with respect to iteration:
\begin{equation}
\label{eq:index-hom}
\Delta_{H^{\nat \ka}}(\bx^\ka)=\ka\Delta_H(\bx).
\end{equation}

We let $\CA_H(x)\in S^1_{\lambda_0}=\R/\lambda_0\Z$ be the action
$\CA_H(\bx)$ taken modulo $\lambda_0$, and hence independent of the
capping.  (The mean index $\Delta_H(x)\in S^1_{2N}=\R/2N\Z$ can be
defined in a similar fashion.)  Finally, recall that the
\emph{augmented action} of a one-periodic (or $\ka$-periodic) orbit
$x$ is the difference
\begin{equation}
\label{eq:aug-action}
\tA_H(x)=\CA_H(\bx)-\frac{\lambda}{2}\Delta_H(\bx),
\end{equation}
where on the right we use an arbitrary capping of $x$; see
\cite{GG:gaps}. By \eqref{eq:recap} and \eqref{eq:recap2}, the augmented
action is independent of the capping and, by \eqref{eq:action-hom} and
\eqref{eq:index-hom}, homogeneous:
\begin{equation}
\label{eq:aug-action-hom}
\tA_{H^{\nat \ka}}(x^\ka)=\ka\tA_H(x).
\end{equation}

\subsection{Floer and quantum (co)homology} 

\subsubsection{Floer homology}
\label{sec:Floer}
In this subsection, we very briefly recall, mainly to set notation,
the construction of the filtered Floer homology for strictly monotone
symplectic manifolds, i.e., monotone manifolds with $\lambda\neq 0$
and $c_1(TM)\mid_{\pi_2(M)}\neq 0$. We refer the reader to, e.g.,
\cite{Fl,HS,MS,Sa,SZ} and for detailed accounts of the construction
and for additional references.

Throughout this paper, all complexes and homology groups are, for the sake of
simplicity, defined over the ground field $\F=\Z_2$. Let $H$ be a one-periodic
non-degenerate Hamiltonian on $M$.  Denote by $\CF^{(-\infty,\,
  b)}_k(H)$, where $b\in (-\infty,\,\infty]$ is not in $\CS(H)$, the
vector space of finite formal sums
$$ 
\alpha=\sum_{\bar{x}\in \bPP(H)} \alpha_{\bar{x}}\bar{x}. 
$$
Here $\alpha_{\bar{x}}\in\F$ and $|\bx|:=\MUCZ(\bar{x})+n=k$ and
$\CA_H(\bar{x})<b$. (Since $M$ is strictly monotone there is no need
to consider semi-infinite sums.)  The graded $\F$-vector space
$\CF^{(-\infty,\, b)}_*(H)$ is endowed with the Floer differential
$\p$ counting the anti-gradient trajectories of the action functional.

More specifically, $\p$ is defined as follows. Fix a
generic (one-periodic in time) almost complex structure $J$ compatible
with $\omega$ and consider solutions $u\colon \R\times S^1\to M$ of
the Floer equation
\begin{equation}
\label{eq:Fl}
\p_s u+J\p_t u =-\nabla H .
\end{equation}
Here the gradient on the right is taken with respect to the
one-periodic in time metric
$\left<\cdot\,,\cdot\right>=\omega(\cdot\,,J\cdot)$ on $M$, and $(s,t)$
are the coordinates on $\R\times S^1$. Denote by $\CM(\bx,\by)$ the
space of solutions of the Floer equation, \eqref{eq:Fl}, asymptotic to
$\bx$ at $-\infty$ and $\by$ at $\infty$ and such that the capping of
$\by$ is equivalent to the capping obtained by attaching $u$ to the
capping of $\bx$. The space $\CM(\bx,\by)$ carries a natural free
$\R$-action by shifts along the $s$-axis. Set
$\widehat{\CM}(\bx,\by)=\CM(\bx,\by)/\R$.  For a generic choice of $J$, we
have $\dim \widehat{\CM}(\bx,\by) =|\bx|-|\by|-1$. Let $m(\bx,\by)\in \F$
stand for the
parity of $\#\widehat{\CM}(\bx,\by)$ when $\widehat{\CM}(\bx,\by)$ is
zero-dimensional (one can show that then it is finite) and zero
otherwise. Finally, set
$$
\p \bx=\sum_{\by} m(\bx,\by)\by.
$$
It is well known that $\p^2=0$; see the references above.

The complexes $\CF^{(-\infty,\, b)}_*(H)$ equipped with the
differential $\p$ form a filtration of the
total Floer complex $\CF_*(H):=\CF^{(-\infty,\, \infty)}_*(H)$. We set
$\CF^{(c,\, c')}_*(H):=\CF^{(-\infty,\,
  c')}_*(H)/\CF^{(-\infty,\,c)}_*(H)$, where  $c$ and $c'$ are not in
$\CS(H)$ and $-\infty\leq
c<c'\leq\infty$. The homology groups of
these complexes are called the \emph{filtered Floer homology} of $H$
and denoted by $\HF^{(c,\, c')}_*(H)$ or by $\HF_*(H)$ when
$(c,\,c')=(-\infty,\,\infty)$. These groups are independent of the
choice of $J$. Every $\F$-vector space $\CF_k(H)$ is
finite-dimensional since $M$ is strictly monotone.

Recall also that for any $u\in \CM(\bx,\by)$ we have
$$
\CA_H(\bx)-\CA_H(\by)=E(u),
\textrm{ where }
E(u):=\int_{\R\times S^1} \|\p u\|^2\, dsdt,
$$
and that $E(u)$ is referred to as the \emph{energy} of $u$.

The total Floer complex and homology are modules over the
\emph{Novikov ring} $\Lambda$ whose action on the complex is
essentially by recapping the orbits. To define this ring, let us first
set some notation.

 Let $\omega(A)$ and $\left<c_1(TM),A\right>$ denote the
integrals of $\omega$ and, respectively, $c_1(TM)$ over a cycle $A$.
Set
$$
I_\omega(A)=-\omega(A)\text{ and } I_{c_1}(A)=-2\left<c_1(TM),
  A\right>,
$$
where $A\in\pi_2(M)$.  Thus $I_\omega=(\lambda/2)I_{c_1}$ since $M$ is
assumed to be monotone.  Let $\Gamma=\pi_2(M)/{\ker I_\omega}
=\pi_2(M)/\ker I_{c_1}$.  In other words, $\Gamma$ is the quotient of
$\pi_2(M)$ by the equivalence relation where two spheres $A$ and $A'$
are considered to be equivalent if $\left<c_1(TM),
  A\right>=\left<c_1(TM), A'\right>$ or, equivalently,
$\omega(A)=\omega(A')$. Clearly $\Gamma\simeq\Z$, and the
homomorphisms $I_\omega$ and $I_{c_1}$ descend to $\Gamma$ from
$\pi_2(M)$.

The group $\Gamma$ acts on $\CF_*(H)$ and on $\HF_*(H)$ via recapping:
an element $A\in \Gamma$ acts on a capped one-periodic orbit $\bar{x}$
of $H$ by attaching the sphere $A$ to the original capping. Denoting
the resulting capped orbit by $\bx\# A$, we have
\begin{equation}
\label{eq:recap}
\MUCZ(\bx\# A)=\MUCZ(\bx)+ I_{c_1}(A)
\text{ and }
\CA_H(\bx\# A)=\CA_H(\bx)+I_\omega(A).
\end{equation}
In a similar vein, 
\begin{equation}
\label{eq:recap2}
\Delta_H(\bx\# A)=\Delta_H(\bx)+ I_{c_1}(A)
\end{equation}
regardless of whether $x$ is non-degenerate or not.

Since $M$ is strictly monotone, the Novikov ring $\Lambda$ may simply
be taken to be the group algebra $\F[\Gamma]$ of $\Gamma$ over $\F$,
i.e., the $\F$-algebra of formal finite linear combinations $\sum
\alpha_A e^A$, where $\alpha_A\in\F$. The Novikov ring $\Lambda$ is
graded by setting $|e^A|=I_{c_1}(A)$ for $A\in\Gamma$.  The action of
$\Gamma$ turns $\CF_*(H)$ and $\HF_*(H)$ (but not $\CF^{(c,\,
  c')}_*(H)$ or $\HF^{(c,\, c')}_*(H)$)
 into
$\Lambda$-modules.  Recall that $\Gamma\simeq \Z$ and denote by $A_0$
the generator of $\Gamma$ with $I_{c_1}(A_0)=-2N$. Setting
$q=e^{A_0}\in \Lambda$, we have $|q|=-2N$, and the Novikov ring
$\Lambda$ is thus the ring of Laurent polynomials $\F[q^{-1},q]$.

The definition of Floer homology extends to all, not necessarily
non-degenerate, Hamiltonians by continuity.  Let $H$ be an arbitrary
(one-periodic in time) Hamiltonian on $M$ and let the end points $c$
and $c'$ of the action interval be outside $\CS(H)$. We
set
$$
\HF^{(c,\, c')}_*(H)=\HF^{(c,\, c')}_*(\tH),
$$
where $\tH$ is a non-degenerate, small perturbation of $H$. It is well
known that the right hand side is independent of $\tH$ as long as the
latter is sufficiently close to $H$.  Working with filtered Floer
homology, \emph{we always assume that the end points of the
  action interval are not in the action spectrum.} Finally, note that
everywhere in this discussion we can replace one-periodic Hamiltonians
by $\ka$-periodic.

The total Floer homology is independent of the Hamiltonian and 
isomorphic to the homology of $M$. More precisely, we have
$$
\HF_*(H)\cong \H_ {*}(M)\otimes \Lambda
$$
as graded $\Lambda$-modules.

\subsubsection{Quantum homology}
\label{sec:QH}
The total Floer homology $\HF_*(H)$, equipped with the pair-of-pants
product, is an algebra over the Novikov ring $\Lambda$. This algebra
is isomorphic to the (small) \emph{quantum homology} $\HQ_*(M)$ of $M$; see,
e.g., \cite{MS}. On the level of $\Lambda$-modules, we have
\begin{equation}
\label{eq:floer-qh}
\HQ_*(M)=\H_*(M)\otimes \Lambda
\end{equation}
with tensor product grading. Thus $|\alpha\otimes e^A|=
|\alpha|+I_{c_1}(A)$, where $\alpha\in \H_*(M)$ and $A\in\Gamma$.  The
isomorphism between $\HF_*(H)$ and $\HQ_*(M)$ is usually defined by using the
PSS-homomorphism; see \cite{PSS} or \cite{MS,U:product}.  Alternatively, at
least in the monotone case, it
can be obtained from a homotopy of $H$ to an autonomous $C^2$-small
Hamiltonian, see \cite{Fl} and, e.g., \cite{HS,Ono:AC}.

The \emph{quantum product} $\alpha*\beta$ of two ordinary homology
classes $\alpha$ and $\beta$ is defined~as
\begin{equation}
\label{eq:qp}
\alpha*\beta=\sum_{A\in\Gamma} (\alpha*\beta)_A \,e^{A},
\end{equation}
where the class $(\alpha*\beta)_A\in \H_*(M)$ is determined via
certain Gromov--Witten invariants of $M$ and has degree
$|\alpha|+|\beta|-2n-I_{c_1}(A)$; see \cite{MS}. Thus
$|\alpha*\beta|=|\alpha|+|\beta|-2n$.

Note that $(\alpha*\beta)_0=\alpha\cap \beta$, where $\cap$ stands for
the cap product and $\alpha$ and $\beta$ are ordinary homology
classes.  Recall also that in \eqref{eq:qp} it suffices to limit the
summation to the negative cone $I_\omega(A)\leq 0$. In particular,
under our assumptions on $M$, we can write
$$
\alpha*\beta=\alpha\cap \beta+\sum_{k>0} (\alpha*\beta)_k \, q^{k},
$$
where $|(\alpha*\beta)_k|=|\alpha|+|\beta|-2n + 2Nk$.  This sum is finite.

The product $*$ is a $\Lambda$-linear, associative, graded-commutative
product on $\HQ_*(M)$. The fundamental class $[M]$ is the unit in the
algebra $\HQ_*(M)$. Thus $a\alpha=(a[M])*\alpha$, where $a\in\Lambda$
and $\alpha\in \H_*(M)$, and $|a\alpha|=|a|+|\alpha|$. By the very
definition, the ordinary homology $\H_*(M)$ is canonically embedded in
$\HQ_*(M)$.

The map $I_\omega$ extends to $\HQ_*(M)$ as
$$
I_\omega(\alpha) =\max\,\{I_\omega(A)\mid \alpha_A\neq 0\}
=\max\,\{-\lambda_0 k \mid \alpha_k\neq 0\},
$$
where $\alpha=\sum \alpha_A e^A=\sum \alpha_k q^k$. We have
\begin{equation}
\label{eq:shift}
I_\omega(\alpha+\beta)\leq \max\,\{ I_\omega(\alpha),I_\omega(\beta)\}
\end{equation}
and 
\begin{equation}
\label{eq:shift2}
I_\omega(\alpha*\beta)\leq I_\omega(\alpha)+I_\omega(\beta).
\end{equation}

\begin{Example} 
\label{ex:cpn}
Let $M=\CP^n$. Then $N=n+1$ and $\HQ_*(\CP^n)$ is the quotient of
$\F[u]\otimes\Lambda$, where $u$ is the generator of
$\H_{2n-2}(\CP^n)$, by the ideal generated by the relation
$u^{n+1}=q[M]$.  Thus $u^k=u\cap \ldots\cap u$ ($k$ times) when
$0\leq k\leq n$, and $[\pt]*u=q[M]$. For this and further examples of
quantum homology calculations and relevant references, we refer the
reader to, e.g., \cite{MS}.
\end{Example}

\subsection{Cap product}
\label{sec:cap-product}
In what follows, it is convenient to view the product on the Floer or
quantum homology from a slightly different angle -- namely, as a
``module structure'' over $\HQ_*(M)$ on the collection of filtered
Floer homology groups. We refer to this structure, somewhat
misleadingly, as the \emph{cap product} even though no cohomology is
explicitly in the picture.

Let us now describe the action of the quantum homology on the filtered
Floer homology in more detail. Let $H$ be a non-degenerate Hamiltonian
and $J$ be a generic almost complex structure. Let $[\sigma]\in
\H_*(M)$. Pick a generic cycle $\sigma$ representing $[\sigma]$ and
denote by $\CM(\bx,\by;\sigma)$ the moduli space of solutions of the
Floer equation, \eqref{eq:Fl}, asymptotic to $\bx$ at $-\infty$ and
$\by$ at $\infty$ and such that $u(0,0)\in \sigma$. For a generic
choice of $\sigma$, we have $\dim \CM(\bx,\by;\sigma)
=|\bx|-|\by|-\codim(\sigma)$. Let $m(\bx,\by;\sigma)\in \F$ be the parity of
$\#\CM(\bx,\by;\sigma)$ when this moduli space is zero-dimensional
(one can show that then it is finite) and zero otherwise. Set
$$
\Phi_{\sigma}(\bx)=\sum_{\by} m(\bx,\by;\sigma)\by.
$$
For any $c$ and $c'$ outside $\CS(H)$, the map
$$
\Phi_\sigma \colon \CF^{(c,\,c')}_*(H)\to \CF^{(c,\,c')}_{*-\codim(\sigma)}(H)
$$
commutes with the Floer differential and descends to a map
$$
\Phi_{[\sigma]} \colon \HF^{(c,\,c')}_*(H)\to \HF^{(c,\,c')}_{*-\codim(\sigma)}(H),
$$
which is independent of the choice of the cycle $\sigma$ representing
$[\sigma]$. The analytical details of this construction and complete
proofs can be found in, e.g., \cite{LO}, in much greater generality
than is needed here. Clearly,
\begin{equation}
\label{eq:id}
\Phi_{[M]}=\id.
\end{equation}

The action of the class $\alpha=q^\nu[\sigma]\in \HQ_*(M)$ is induced by the map
$$
\Phi_{q^\nu\sigma}(\bx):=\sum_{\by} m(q^\nu\bx,\by;\sigma)\by.
$$
Here, as in Section \ref{sec:Floer}, $q=e^{A_0}$ where $A_0$ is the generator of
$\Gamma$ with $I_{c_1}(A_0)=-2N$. It is routine to
check that 
$\Phi_{q^\nu[\sigma]}=q^\nu\Phi_{[\sigma]}$. (This is a consequence of
the fact that $ \CM(q^\nu\bx,\by;\sigma)=\CM(\bx,q^{-\nu}\by;\sigma)$.)
Note that now the action
interval is shifted by $I_\omega(\alpha)$, i.e.,
\begin{equation}
\label{eq:cap-action}
\Phi_\alpha \colon \HF^{(c,\,c')}_*(H)\to 
\HF^{(c,\,c')+I_\omega(\alpha)}_{*-2n+|\alpha|}(H),
\end{equation}
where $(c,\,c')+s$ stands for $(c+s,\,c'+s)$.

By linearity over $\Lambda$, we extend $\Phi_\alpha$ to all $\alpha\in
\HQ_*(M)$ so that \eqref{eq:cap-action} still holds. The maps
$\Phi_\alpha$ are linear in $\alpha$ once the shift in filtration is
taken into account. Namely, let $(a,\,b)$ be any interval such that
$a\geq c+ \max\,\{ I_\omega(\alpha),I_\omega(\beta)\}$ and $b\geq c'+
\max\,\{ I_\omega(\alpha),I_\omega(\beta)\}$. Then the
quotient--inclusion maps $\iota_\alpha$ and $\iota_\beta$ from
$\HF^{(c,\,c')+I_\omega(\alpha)}_*(H)$ and, respectively,
$\HF^{(c,\,c')+I_\omega(\beta)}_*(H)$ to $H^{(a,\,b)}_*(H)$ are defined;
see, e.g., \cite[Example 3.3]{Gi:Co}.  Clearly, $a\geq
c+I_\omega(\alpha+\beta)$ and $b\geq c'+I_\omega(\alpha+\beta)$ by
\eqref{eq:shift}, and we also have the map
$\iota_{\alpha+\beta}$ from the target space of $\Phi_{\alpha+\beta}$
to $\HF^{(a,\,b)}_*(H)$. Additivity takes the form
$$
\iota_{\alpha+\beta}\Phi_{\alpha+\beta}=\iota_\alpha\Phi_{\alpha}+\iota_\beta\Phi_{\beta}.
$$

The maps $\Phi_{\alpha}$, for all $\alpha\in\HQ_*(M)$, fit together
to form an action of the quantum homology on the collection of
filtered Floer homology groups. 

This action is also multiplicative. In other words, we have
\begin{equation}
\label{eq:action}
\Phi_\beta\Phi_\alpha=\Phi_{\beta*\alpha}.
\end{equation}
Note that here, as in the case of additivity, the maps on the two
sides of the identity have, in general, different target spaces. Thus
\eqref{eq:action} should, more accurately, be understood as that for
any interval $(a,\,b)$ with $a\geq c+I_\omega(\alpha)+I_\omega(\beta)$
and $b\geq c'+I_\omega(\alpha)+I_\omega(\beta)$ the following diagram
commutes:
\begin{equation}
\label{eq:ass}
{\xymatrix{
\HF^{(c,\,c')}_*(H) \ar[r]^-{\Phi_\alpha}
\ar[dr]_-{\Phi_{\beta*\alpha}}
&\HF^{(c,\,c')+I_\omega(\alpha)}_{*-2n+|\alpha|}(H)
\ar[r]^-{\Phi_\beta}
&\HF^{(c,\,c')+I_\omega(\alpha)+I_\omega(\beta)}_{*-4n+|\alpha|+|\beta|}(H)
\ar[d]
\\
&\HF^{(c,\,c')+I_\omega(\beta*\alpha)}_{*-2n+|\beta*\alpha|}(H)
\ar[r]
&\HF^{(a,\,b)}_{*-2n+|\beta*\alpha|}(H)
}}
\end{equation}
Here the vertical arrow and the bottom horizontal arrow are again the
natural quotient--inclusion maps whose existence is guaranteed by
our choice of $a$ and $b$ and \eqref{eq:shift2}. Note that
\eqref{eq:action} can be thought of as a form of associativity of the
product in quantum or Floer homology; \eqref{eq:action} was
essentially established in \cite{LO} and \cite{PSS}; see also
\cite[Remark 12.3.3]{MS}.

On the total Floer homology $\HF_*(H)\cong \HQ_*(M)$, the cap product
coincides with the quantum or pair-of-pants product.

\subsection{Non-contractible orbits}
\label{sec:non-contr}
To deal with the case of non-contractible orbits, we need to strengthen
the monotonicity requirement on $M$. Namely, we say that $M$ is
\emph{toroidally monotone} if for every map $v\colon \T^2\to M$ we
have $\left<[\omega], [v]\right>=\lambda\left<c_1(TM),[v]\right>$ for
some constant $\lambda\geq 0$ independent of $v$. This condition is in
general stronger than monotonicity, but weaker than requiring that
$[\omega]=\lambda c_1(TM)$. (This can be easily seen by examining
surfaces or products of surfaces.) Furthermore, assume from now on
that $M$ is in addition strictly monotone. Then the toroidal
monotonicity constant $\lambda$ is equal to the ordinary monotonicity
constant and the minimal toroidal Chern number and the toroidal
rationality constant (both defined in the obvious way) agree with
their spherical counterparts.

Let $\zeta$ be a free homotopy class of maps $S^1\to M$. Fix a
\emph{reference loop} $z\in\zeta$ and a symplectic trivialization of $TM$
along $z$. (In fact, it would be sufficient to fix a trivialization of
the canonical bundle of $M$ along $z$.) A capped loop $x$ is a loop in
$\zeta$ together with a cylinder (i.e., a homotopy) connecting it to
$z$. Two cappings are considered equivalent when
$\left<c_1(TM),[v]\right>=0$, where $v\colon \T^2\to M$ is the torus
obtained by attaching the cappings to each other. (Due to the toroidal
monotonicity condition, we also have $\left<[\omega],[v]\right>=0$.) As in the
contractible case, we denote the capping by a bar.

Let now $H$ be a Hamiltonian on $M$. We consider capped one-periodic
(or $\ka$-periodic) orbits of $H$ in the class $\zeta$. For such orbits,
the action $\CA_H(\bx)$ and the mean index $\Delta_H(\bx)$ (and the
Conley--Zehnder index $\MUCZ(\bx)$ when $x$ is non-degenerate) are
obviously well defined. Likewise, the augmented action
\eqref{eq:aug-action} is still defined and independent of the capping,
and \eqref{eq:Delta-MUCZ} holds.  

The construction of the filtered Floer complex $\CF_*^{(c,\,
  c')}(H,\zeta)$ and the filtered Floer homology $\HF_*^{(c,\,
  c')}(H,\zeta)$ goes through exactly as in the contractible case.

The Floer complex and homology are again modules over a Novikov ring
$\Lambda'$. In general, this ring can be different from the spherical
one. This is the case, for instance, when $M$ is symplectically
aspherical and toroidally monotone (e.g., $M=\T^2$) and $\Lambda=\F$
while $\Lambda'=\F[q,q^{-1}]$. However, when $M$ is both strictly
monotone and toroidally monotone, as is assumed here, the natural
inclusion $\Lambda\to\Lambda'$ is an isomorphism and we can keep the
notation $\Lambda$ for both of the Novikov rings.

Since all one-periodic orbits of a $C^2$-small autonomous Hamiltonian
are constant, and hence contractible, the total non-contractible Floer
homology is trivial: $\HF_*(H,\zeta)=0$ whenever $\zeta\neq
0$. Furthermore, the pair-of-paints product is not defined on
non-contractible filtered Floer homology for an individual class
$\zeta$. (Such a product between $\HF_*^{(c_1,\, c'_1)}(H,\zeta_1)$
and $\HF_*^{(c_2,\, c'_2)}(K,\zeta_2)$ would take values in
$\HF_*^{(c_1+c_2,\, c'_1+c'_2)}(H\nat K,\zeta_1+\zeta_2)$.)
However, the cap product with ordinary $\HQ_*(M)$ is still defined and
the discussion from Section \ref{sec:cap-product} carries over
word-for-word to the non-contractible case.

The action and the index of periodic orbits (and hence the grading and
filtration of the Floer complex) do depend on the choice of $z$ and,
in the index case, on the trivialization of $TM$ along $z$.  Whenever
we consider the iteration $H^{\nat \ka}$ of $H$, we simultaneously
iterate the class $\zeta$ and the reference curve $z$ (i.e., pass to
$\ka\cdot \zeta$ and $z^\ka$) and, in the obvious sense, also iterate
the trivialization. Under these assumptions, the action, the mean
index, and the augmented action are homogeneous with respect to the
iterations, i.e., \eqref{eq:action-hom}, \eqref{eq:index-hom}, and
\eqref{eq:aug-action-hom} remain valid.

\begin{Remark}
\label{rmk:0vs-non0}
The index and action conventions of this section can be used even when 
$\zeta=0$, resulting only in a shift of the standard grading
and filtration in the Floer homology.
\end{Remark}

\section{Hyperbolic fixed points}
\label{sec:hyperbolic}

Our goal in this section is to establish a technical result underpinning
the proof of Theorem \ref{thm:main}. This is the fact that the
energy required for a Floer connecting trajectory to approach an
iterated hyperbolic orbit and cross its fixed neighborhood is
bounded away from zero by a constant independent of the order of iteration.

\subsection{Setting}
\label{sec:setting}
Let us state precisely the conditions needed for this energy
lower bound to hold. Let $H$ be a one-periodic in time Hamiltonian on a
symplectic manifold $(M,\omega)$, which is not required to
be monotone or closed or satisfy any extra requirements at all; for
the result we are interested in is essentially local. Fix an
almost complex structure $J$, again one-periodic in time,
compatible with $\omega$.  

We consider solutions $u\colon \Sigma\to M$ of the Floer equation
\eqref{eq:Fl}, where $\Sigma\subset \R\times S^1_\ka$ is a closed
domain (i.e., a closed subset with non-empty interior). Now, however,
in contrast with Section \ref{sec:Floer}, the period $\ka$ is not
necessarily fixed, and the domain $\Sigma$ of $u$ need not be the
entire cylinder $\R\times S^1_\ka$.  By definition, the energy of $u$
is
$$
E(u)=\int_\Sigma \|\p_s u\|^2 \, ds dt.
$$
Here $\|\cdot\|$ stands for the norm with respect to
$\left<\cdot\,,\cdot\right>=\omega(\cdot,J\cdot)$, and hence
$\|\cdot\|$ also depends on $J$.

Let $\gamma$ be a hyperbolic (not necessarily contractible)
one-periodic orbit of $H$. We say that $u$ is asymptotic to
$\gamma^\ka$, the $\ka$th iteration of $\gamma$, as $s\to\infty$ if
$\Sigma$ contains some cylinder $[s_0,\infty)\times S^1_\ka$ and
$u(s,t)\to \gamma^\ka(t)$ $C^\infty$-uniformly in $t$ as
$s\to\infty$.
  
Finally, let $U$ be a fixed (sufficiently small) closed neighborhood
of $\gamma$ with smooth boundary or, more precisely, such a
neighborhood of the natural lift of $\gamma$ to $S^1\times M$.

\begin{Theorem}[Ball--crossing Energy Theorem]
\label{thm:energy}
There exists a constant $c_\infty>0$, independent of $\ka$ and
$\Sigma$,
such that for any solution $u$ of the Floer equation, \eqref{eq:Fl},
with $u(\p \Sigma)\subset \p U$ and $\p \Sigma\neq\emptyset$, 
which is asymptotic to $\gamma^\ka$ as $s\to\infty$, we have
\begin{equation}
\label{eq:energy}
E(u)>c_\infty .
\end{equation}
Moreover, the constant $c_\infty$ can be chosen to make
\eqref{eq:energy} hold for all $\ka$-periodic almost complex
structures (with varying $\ka$) $C^\infty$-close to $J$ uniformly on
$\R\times U$.
\end{Theorem}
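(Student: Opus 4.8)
The plan is to argue by contradiction using Gromov compactness in the form of \cite{Fi}. Suppose the theorem fails: then there are sequences $\ka_j\to\infty$ (passing to a subsequence, the $\ka_j$ are unbounded, since for each fixed $\ka$ standard Gromov compactness already yields a positive lower bound), periodic almost complex structures $J_j$ converging to $J$ in $C^\infty$ uniformly on $\R\times U$, domains $\Sigma_j\subset\R\times S^1_{\ka_j}$ with $\p\Sigma_j\neq\emptyset$, and solutions $u_j$ of the Floer equation for $(H,J_j)$ with $u_j(\p\Sigma_j)\subset\p U$, asymptotic to $\gamma^{\ka_j}$ as $s\to\infty$, with $E(u_j)\to 0$. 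The first step is to normalize the picture: since $u_j(\p\Sigma_j)\subset\p U$ while $u_j$ is asymptotic to $\gamma^{\ka_j}$ (which lies in the interior of $U$), each $u_j$ must cross the ``annular'' region $U\ssminus V$ for a slightly smaller neighborhood $V$ of $\gamma$; by translating in $s$ I can arrange that the restriction $v_j$ of $u_j$ to a fixed-width band $[-R,R]\times S^1_{\ka_j}$ carries a definite fraction of the ``crossing'' and, in particular, that $v_j$ meets both $\p U$ (or its preimage) and a region deep inside $V$. Because $E(v_j)\leq E(u_j)\to 0$, no bubbling occurs and $\|\p_s v_j\|_{L^2}\to 0$.

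The heart of the argument is the following dichotomy, which exploits \emph{hyperbolicity}. The low-energy functions $v_j$ are, after passing to a subsequence, $C^\infty_{\mathit{loc}}$-close on every compact band to a finite-energy Floer trajectory of $H$ with respect to $J$; but the limit has zero energy, hence is an (iterated) one-periodic orbit of $H$ — and by the crossing property it must be an orbit lying in $U$ that touches $\p U$, which (for $U$ small) forces it to be a \emph{constant reparametrization in $s$ along $\gamma$ itself}, i.e. the limit is $\gamma$ traversed. The point now is that a hyperbolic orbit is \emph{isolated as a solution of the Floer equation with small energy inside its own neighborhood in the following quantitative sense}: linearizing the Floer equation about $\gamma^\ka$, the linearized operator $\p_s + J_0\p_t + S(t)$ has no kernel and no small eigenvalues — more precisely, because $\gamma$ is hyperbolic the symplectic linearization $d\varphi_H^t|_\gamma$ has no eigenvalue on the unit circle, so the associated symmetric operator $A(t) = -J_0\p_t - S(t)$ on $L^2(S^1_\ka)$ has a spectral gap around $0$ that is \emph{independent of $\ka$} (the spectrum is, up to exponential corrections, $\ka$-periodic and stays bounded away from $0$, being controlled by $\log$ of the absolute values of the eigenvalues of the return map). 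This uniform gap gives an exponential-decay / a priori estimate: any solution of the Floer equation in the band with sup-norm distance $<\eps_0$ from $\gamma^\ka$ and energy $<\eps_0$ satisfies $\|\p_s v\|_{C^0(s)}\leq Ce^{-\delta|s - s_\pm|}$ near the ends, with $C,\delta,\eps_0$ \emph{independent of $\ka$}. Consequently such a $v$ cannot travel a fixed distance (the width of $U\ssminus V$) in the $M$-direction — contradicting the crossing property. This is where the crucial $\ka$-independence comes from, and it is precisely the step that would fail for an elliptic orbit, where the spectral gap closes as $\ka\to\infty$.

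Assembling the pieces: from the dichotomy, once $j$ is large the functions $v_j$ are trapped within $\eps_0$ of $\gamma^{\ka_j}$ throughout the band (they converge to $\gamma$ there), the uniform exponential estimate applies, and hence $v_j$ cannot connect $\p U$ to the interior of $V$ — contradicting the crossing property that we forced on $v_j$ in the normalization step. Therefore no such sequences exist and the claimed $c_\infty>0$ holds; the last sentence of the theorem is built in, since $J_j$ was allowed to vary and converge to $J$ uniformly on $\R\times U$ from the outset.

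The main obstacle, and the step deserving the most care, is making the ``uniform spectral gap / uniform exponential decay'' estimate genuinely independent of the iteration order $\ka$. One must check that linearizing along $\gamma^\ka$ rather than along $\gamma$ does not shrink the gap: the relevant operator on $S^1_\ka$ is the $\ka$-fold ``unwinding'' of the one on $S^1$, its spectrum is governed by the monodromy $d\varphi_H|_{\gamma(0)}$ (whose eigenvalues, being off the unit circle, are unchanged by iteration in the sense that matters), and the Sobolev/elliptic constants in the resulting a priori estimate can be taken uniform because the estimate is local in $s$ and the geometry near $\gamma$ is $\ka$-periodic. Handling the $C^\infty$-small, $\ka$-varying perturbations $J_j$ of $J$ simultaneously requires only that the gap is stable under small perturbations of the coefficients, which follows from its quantitative lower bound. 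Once this uniform estimate is in hand, the compactness and contradiction arguments are routine applications of \cite{Fi}.
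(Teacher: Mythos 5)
Your approach is genuinely different from the paper's, and it is worth comparing the two before pointing to the gaps. The paper's proof is ``soft'': after normalizing so that $\gamma$ is a constant orbit, it observes that hyperbolicity forces the following purely dynamical fact --- there is an $L_0>0$ such that no integral curve of $X_H$ that passes through a point of $\p B$ at some moment $\tau\in[0,1]$ can remain in $B$ for all $|t-\tau|<L_0$. It then lifts the $u_i$ to the universal cover, takes the graphs in $\C\times M$, restricts to a fixed rectangle $\Pi$ of $t$-width $>2L_0$, and invokes Fish's target-local compactness \cite{Fi} to produce, from a sequence with $E(u_i)\to0$, a zero-energy limit on a subdomain of $\Pi$. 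That limit is an $s$-independent solution, i.e.\ an integral curve, which by the normalization touches $\p B$ at time $\tau$ and stays in $B$ for $t\in[\tau-L',\tau+L']$ with $L'>L_0$; this contradicts the dynamical fact. Your proposal instead aims for a ``hard'' quantitative estimate --- a $\ka$-uniform spectral gap for the linearized Floer operator along $\gamma^\ka$ and a resulting $\ka$-uniform exponential decay. That route, executed carefully, is genuinely more informative: it is the content of Remark~\ref{rmk:growth}, where the energy is shown to grow linearly in $\ka$ in the linear model. So the spirit of your argument is right, and the ``this is where ellipticity would fail'' remark is correct (for an elliptic orbit the gap closes as $\ka\to\infty$, and also the paper's dynamical lemma fails). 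But the two proofs use hyperbolicity at different places: the paper uses it to rule out lingering integral curves, you use it for a spectral gap.

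There is, however, a genuine gap in your middle paragraph, and it is not a presentational issue. You assert that the $C^\infty_{\mathit{loc}}$-limit of the $v_j$ ``has zero energy, hence is an (iterated) one-periodic orbit of $H$,'' and that because it lies in $U$ and touches $\p U$ it ``must be $\gamma$ traversed.'' Neither claim is correct. A zero-energy limit over a band is merely an $s$-independent map, i.e.\ an integral curve of $X_H$; since $\ka_j\to\infty$, there is no reason for this curve to be closed, and it certainly need not be $\gamma$. More seriously, $\gamma$ lies in the interior of $U$, so a limit that touches $\p U$ cannot possibly be $\gamma$ --- the two statements you make in the same sentence contradict each other. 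Consequently your final step, ``once $j$ is large the functions $v_j$ are trapped within $\eps_0$ of $\gamma^{\ka_j}$ throughout the band,'' also fails: by your own normalization every $v_j$ meets $\p U$, which is at a fixed positive distance from $\gamma$, so the $v_j$ cannot be uniformly $\eps_0$-close to $\gamma$ for any small $\eps_0$. This matters because the exponential-decay estimate you want to apply is only valid in a region where $u$ is close enough to $\gamma^\ka$ for the linearization to control the nonlinear equation; the place where you need a lower bound on the energy is precisely the region where $u$ is leaving that neighborhood, and your argument never bridges the two. (The correct version of your idea would estimate the energy of the portion of $u$ between the last exit from a small tube around $\gamma^\ka$ and $s=+\infty$, via an a priori bound of the type sketched in Remark~\ref{rmk:growth}; this requires nonlinear control, not just the linear spectral gap, and is substantially more delicate than what you have written.)

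Two smaller points. First, the $\ka$-uniform spectral gap is plausible for the reason you indicate (the relevant perturbed monodromies $\Psi_\lambda(1)$ remain uniformly hyperbolic for small $\lambda$, so $\Psi_\lambda(1)^\ka$ never has eigenvalue $1$), but in your writeup it is asserted, not proven, and it is the entire technical content of your approach; it deserves a real argument. Second, you cite Fish's compactness theorem, but your argument as written does not actually use it --- once you have passed to a fixed band and your solutions have energy tending to zero, ordinary elliptic bootstrapping gives the local convergence you invoke. Fish's theorem plays a much more substantive role in the paper's proof, where one has to control holomorphic curves with varying, possibly wild, boundary in $\p U$ before taking the limit.
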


\begin{Remark} 
\label{rmk:energy}
The most important point of this theorem is the fact that the energy
required to approach $\gamma^\ka$ through $U$ is bounded away from
zero by a constant $c_\infty$ independent of the iteration $\ka$ and
also of the domain $\Sigma$ of $u$. (Naturally, $c_\infty$ depends on
$H$ and $J$ and $\omega$.) A similar lower bound obviously holds for
solutions of the Floer equation leaving $\gamma^\ka$ through $U$,
i.e., asymptotic to $\gamma^\ka$ as $s\to -\infty$. Clearly, the
requirement that the orbit $\gamma$ is one-periodic can be replaced by
the assumption that it is just a periodic orbit.  Furthermore, in the
``moreover'' part of the theorem, we can take, for instance,
$\ka$-periodic almost complex structures (with $\ka$ fixed)
$C^\infty$-close uniformly on $U$ to the $\ka$-periodic extension of
$J$.  Periodic perturbations of $H$ can be incorporated into the
theorem in a similar fashion.
\end{Remark}

\begin{Remark} 
\label{rmk:energy2}
 It might also be worth pointing out that the condition
  $u(\p \Sigma)\subset \p U$ should be understood as that the map
  $(s,t)\mapsto (t, u(s,t))$ sends $\p \Sigma$ to the boundary of the
  domain in $S^1_\ka\times M$ covering $U\subset S^1\times
  M$. Furthermore, note that, when dealing with a countable collection
  of maps $u$, we can always ensure that the domains $\Sigma$ have
  smooth boundary by slightly shrinking $U$.
\end{Remark}

We apply Theorem \ref{thm:energy} in the following setting. Let $u$ be
a solution of the Floer equation for $H^{\nat \kappa}$, which is
asymptotic to $\gamma^\kappa$ on one side and to some
$\kappa$-periodic orbit $x$ on the other. Assume furthermore that $x$
does not enter a neighborhood $U$ of $\gamma$. Then $E(u)>c_\infty$
for some constant $c_\infty>0$, which, by the theorem, is independent
of $\kappa$ and $u$. Moreover, we can replace $H^{\nat \kappa}$ by any
$\kappa$-periodic Hamiltonian $K$ equal to $H^{\nat \kappa}$ on $U$.

\begin{Remark}
\label{rmk:elliptic} 
The assumption that $\gamma$ is hyperbolic is absolutely crucial in
Theorem \ref{thm:energy}. Consider, for instance, the 
Hamiltonian $H(z)=a |z|^2$ generating an irrational rotation of
$\C=\R^2$. Then, as a direct calculation shows, the ball crossing
energy can get arbitrary small for arbitrarily large iterations
$\ka$. Alternatively, this can be seen by examining the height
function $H$ on $S^2$ and observing that, for a suitable choice of
$\ka$ and of the cappings of the polls $x$ and $y$, the difference
$|\CA_{H^{\nat \ka}}(\bx^\kappa)-\CA_{H^{\nat \ka}}(\by^\kappa)|$ can
be arbitrarily small while there are always Floer trajectories
connecting $\bx^\kappa$ and $\by^\kappa$. (The latter fact follows
from the structure of the cap or quantum product on $S^2$; see
Example \ref{ex:cpn}.)
\end{Remark}

\subsection{Proof of Theorem \ref{thm:energy}} The key feature of
hyperbolic orbits the argument relies on is that, given a closed
neighborhood of $\gamma$, the orbit of $\varphi^t_H$ with the initial
condition on (or near) the boundary of the neighborhood cannot stay
within the neighborhood simultaneously for large positive and large
negative times.

To simplify the setting, we first observe that without loss of
generality we can assume that $\gamma(t)$ is a constant orbit, which
we still denote by $\gamma$. This is a consequence of the fact that
there exists a one-periodic loop of Hamiltonian diffeomorphisms
$\eta_t$, defined on a neighborhood of $\gamma$, such that
$\eta_t(\gamma(0))=\gamma(t)$ (see, e.g., \cite[Section
5.1]{Gi:CC}), which allows us to replace $\varphi_H^t$ by
$\eta_t^{-1}\varphi_H^t$ and $H$ by the corresponding Hamiltonian. 
This step is not really necessary, but it does simplify the notation
and the geometrical picture, and, since the loop $\eta_t$ is in
general only local, does not require the orbit $\gamma$ to be
contractible. From now on, we can assume that $U$ and other
neighborhoods of $\gamma$ are actually subsets of $M$ rather than of
$S^1_\ka\times M$. 

Next, let us fix another closed neighborhood $B\subset
\mathrm{int}(U)$ of $\gamma$ with smooth boundary. If $B$ is
sufficiently small, there exists a constant $L_0>0$, depending only on
$B$ and $H$, such that for all initial times $\tau\in [0,\,1]$ no
integral curve of $H$ passing through a point of $\p B$ (or near $\p
B$) at the moment $\tau$ can stay in $B$ for all $t$ with
$|t-\tau|<L_0$.  This readily follows from the assumptions that
$\gamma$ is hyperbolic and $H$ is one-periodic in time. (Note that the
role of $\tau$ here is to account for the fact that the vector field
$X_{H}$ and the flow $\varphi_H^t$ are time-dependent. If
$\varphi_H^t$ were a true flow, we would be able to take $\tau=0$. In
our setting, where $H$ is one-periodic in time, it suffices to require
that $\tau\in [0,\,1]$.)

The idea of the proof is that if there is a sequence of solutions $u_i$ of the Floer
equation with $E(u_i)\to 0$, this sequence converges to a zero
energy solution defined on a domain in $\C$. The limit solution
is independent of $s$, and hence an integral curve of $H$. The
sequence $u_i$ can be chosen so that this integral curve is
defined on the interval $[\tau-L,\, \tau+L]$ for some $L>L_0$ and $\tau\in
[0,\,1]$, contained in $B$, and tangent to $\p B$ at the moment
$\tau$. This is impossible since $\gamma$ is hyperbolic.

Thus, arguing by contradiction, assume now that there exist a sequence
of $\ka_i$-periodic almost complex structures $J_i$ on $M$, compatible
with $\omega$ and $C^\infty$-converging to $J$ uniformly on $\R\times
U$, and a sequence $u_i\colon\Sigma_i\to U$ of solutions of
\eqref{eq:Fl} (for $J_i$ with $\ka=\ka_i$), satisfying the hypotheses
of Theorem \ref{thm:energy} and such that $E(u_i)\to 0$.  As we show
below, we may assume without loss of generality that the maps $u_i$
and the almost complex structures $J_i$ have the following properties:
\begin{itemize}

\item[(i)] The domains $\Sigma_i$ have smooth boundary.

\item[(ii)] The region $[0,\infty)\times S^1_{\ka_i}$ is the largest half-cylinder in
$\Sigma_i$ mapped by $u_i$ into $B$, i.e.,
$u_i\big([0,\infty)\times S^1_{\ka_i}\big)\subset B$ and
$u_i\big(\{0\}\times S^1_{\ka_i}\big)$ touches $\p B$ at at least one
point $u_i(0,\tau_i)$, and furthermore $0\leq \tau_i\leq 1$.

\item[(iii)] The sequences $\tau_i$ and
$u_i(0,\tau_i)$ converge.

\end{itemize}

Here (i) can be ensured by slightly altering the domain $U$; see
Remark \ref{rmk:energy2}. Furthermore, the first part of (ii) readily
follows since $H$ is independent of $s$, and hence \eqref{eq:Fl} is
translation invariant.  (This requires changing $u_i$ by applying a
translation in $s$, which clearly does not change the energy.) Next,
to ensure that $0\leq \tau_i\leq 1$, we apply an integer translation
in $t$ to $u_i$ and $J_i$. Since the almost complex structures $J_i$
$C^\infty$-converge to $J$ uniformly in $t\in\R$, the same is true for
the translated almost complex structures. This procedure changes the
almost complex structures $J_i$ and the solutions $u_i$, but it does not
effect the energy of $u_i$, which therefore still goes to zero.
Finally, it suffices to pass to a subsequence to establish (iii). 

Let us lift the domains $\Sigma_i$ to the domains $\hat{\Sigma}_i$ in
the universal covering $\C$ of $\R\times S^1_{\ka_i}$ and view the
maps $u_i$
(keeping the notation) as maps $u_i\colon \hat{\Sigma}_i\to M$, which
are $\ka_i$-periodic in $t$. As is well--known, the graph $\hat{u}_i$
of $u_i$ is a $\tilde{J}_i$-holomorphic curve in $\C\times M$ with
respect to an almost complex structure $\tilde{J}_i$ which
incorporates both $J_i$ and $X_H$. Recall that the projection
$\pi\colon \C\times M\to \C$ is holomorphic, and hence the projection
of $\hat{u}_i$ to $\C$ is also a holomorphic map. Set
$\tau=\lim\tau_i\in [0,\,1]$.

Pick arbitrary constants $L>L_0$ and $a>0$ and fix a rectangle 
$$
\Pi=[-a,a]\times [\tau- L,\tau+ L]\subset \C.
$$
From now on we will focus on the restrictions $u_i\mid_\Pi$, which we
still denote by $u_i$. Let $\tilde{u}_i$ be the graph of this restriction,
i.e., intersection of $\hat{u}_i$ with $\Pi\times U$. Clearly the
boundary of $\tilde{u}_i$ lies in $\p (\Pi\times U)$ and 
$$
\Area(\tilde{u}_i)\leq\Area(\Pi)+E(u_i)<\const,
$$ 
where the constant on the right is independent of $i$. Let us now
shrink $\Pi$ and $U$ slightly. To be more precise, set
$$
\Pi'=[-a',a']\times [\tau- L',\tau+ L']\subset \Pi,
$$
where $0<a'<a$ and $L_0<L'<L$, and let $U'$ be a closed neighborhood
of $\gamma$ such that $B\subset \mathrm{int}(U')$ and $U'\subset
\mathrm{int}(U)$.

By Fish's version of the Gromov compactness theorem, \cite[Theorem
A]{Fi}, the intersections of the $\tilde{J}_i$-holomorphic curves
$\tilde{u}_i$ with $\Pi'\times U'$ Gromov--converge, after passing if
necessary to a subsequence, to a (cusp) $\tilde{J}$-holomorphic curve
$\tilde{u}$ in $\Pi'\times U'$ with boundary in $\p (\Pi'\times
U')$. This holomorphic curve is a union of multi-sections over subsets
of $\Pi'$ and possibly some components contained in the fibers of
$\pi$ (the bubbles) with boundary in $\p U'$. The latter are points
since $E(u_i)\to 0$. Furthermore, $\tilde{u}$ is in fact a unique
section over some subset of $\Pi'$. The reason is that the
intersection index of $\tilde{u}$ with the fiber over a regular point
$(s,t)$ of its projection to $\Pi'$ is either one or zero -- the
intersection index of $\tilde{u}_i$ with the fiber. For instance, the
index is one when $(s,t)$ is in the domain of each $u_i$ and the
distance from $u_i(s,t)$ to $\p U'$ stays bounded away from zero as
$i\to \infty$.  (Here and below we loosely follow the proof of
\cite[Lemma 2.3]{McL}. Note also that we need the parameters $a'$ and
$L'$ and the neighborhood $U'$ to be ``generic''.)

To summarize, $\tilde{u}$ is the graph of a solution $u$ of the Floer
equation, \eqref{eq:Fl}, defined on some (obviously connected) subset
$D$ of $\Pi'$. Moreover, as is easy to see from \cite{Fi}, after
making an arbitrarily small change to $a'$ and $L'$ we can ensure that
the domain $D$ of $u$ has piece-wise smooth boundary. The maps $u_i$
uniformly converge to $u$ on compact subsets of $\mathrm{int}(D)$.

Observe now that $D$ contains the half-rectangle $\Pi^+=
\{s>0\}\cap\Pi'$. This is a consequence of the fact that each
$\tilde{u}_i$ projects surjectively onto the $\{s\geq 0\}$-part of
$\Pi$ or, in other words, this part of $\Pi$ is in the domain of
$u_i$. As a consequence, $D$ also contains the closure of $\Pi^+$ and,
in particular, the point $(0,\tau)$. Moreover, this point is in
the interior of $D$ since the distance from the points $u_i(0,\tau)\in
B$ to $\p U'$ stays bounded away from zero. Thus 
$$
u(0,\tau)=p:=\lim u_i(0,\tau_i)\in \p B.
$$

Clearly, the solution $u$ has zero energy. Thus $\p_s u(s,t)=0$
identically on $D$, and hence $u(s,t)$ is an integral curve $u(t)$ of
the Hamiltonian flow of $H$. This integral curve
passes through the point $p\in \p B$ at the moment
$\tau$, and $u(t)\in B$ for all $t\in [\tau-L',\tau+L']$, which
is impossible due to our choice of $L_0$ and the fact that
$L'>L_0$. This contradiction completes the proof of the theorem.

\begin{Remark} 
\label{rmk:growth}
  Under some additional assumptions on $J$ and $H$ and $u$ in Theorem
  \ref{thm:energy}, one can obtain a much more precise lower bound on the
  energy of $u$. Assume, for instance, that $H$ is a quadratic
  hyperbolic Hamiltonian on $\R^{2n}$, and hence $\varphi_H^t$ is a
  linear flow. Furthermore, let us require that $\Sigma=[0,\infty)\times
  S^1_\ka$ and $u$ be a solution of \eqref{eq:Fl} for a linear
  complex structure $J$ suitably adapted to $H$. Then
  $$
  E(u)>\const \|u(0)\|^2_{L^2},
  $$
  where $u(0)=u(0,\cdot)$ and $\const>0$ is independent of $\ka$ and $u$,
  but depends on $H$ and $J$. (This can be proved by analyzing
  \eqref{eq:Fl} via the Fourier expansion of $u$ in $t$.) In
  particular, if $u(\partial \Sigma)$ lies outside the ball of radius
  $R$, i.e., $\|u\|_{L^\infty}>R$, we have $E(u)>\const\cdot R^2
  \ka$. As a consequence, under these conditions, the ball--crossing
  energy $c_\infty$ grows linearly in $\ka$.
\end{Remark}

\begin{Remark}
  An argument similar to (and in fact simpler than) the proof of
  Theorem \ref{thm:energy} shows that, for a fixed period $\kappa$,
  the ball crossing energy is bounded away from zero by a constant
  independent of a solution of the Floer equation for any isolated
  $\kappa$-periodic orbit of an arbitrary $\kappa$-periodic
  Hamiltonian; cf.\ the proof of \cite[Lemma 2.3]{McL}.  However, it
  is essential that in this case the period $\kappa$ is fixed. The
  bound depends on the neighborhood of the orbit, the Hamiltonian and
  the almost complex structure, and, in contrast with Theorem
  \ref{thm:energy}, on $\kappa$.
\end{Remark}

\begin{Remark}
  As is clear from the proof, Theorem \ref{thm:energy} holds in some
  instances for periodic orbits which are not necessarily hyperbolic;
  for instance, the argument applies to $\gamma\equiv 0$ for the
  degenerate Hamiltonians $x^2-y^4$ and $x^4-y^4$ and the ``monkey
  saddle'' on $\R^2$.
\end{Remark}

\section{Proof of the main theorem}
\label{sec:proof-relations}

\subsection{Idea of the proof}

Fix a neighborhood $U$ of $\gamma$ as in Theorem \ref{thm:energy}.  To
explain the idea of the proof, let us, focusing on Case (i), first
show that $c_\infty\leq \lambda_0$ whenever no other periodic orbit
enters $U$.  Assume the contrary: $c_\infty>\lambda_0$. Without loss
of generality, we may also assume that the orbit $\gamma$ is
one-periodic and that $\CA_H(\bg)=0$ for some capping of
$\gamma$. Then the chain $\bg \in\CF_*^{(a,\,b)}(H)$ is closed for any
interval $(a,\,b)$ containing $[-\lambda_0,0]$ and such that
$b-a<c_\infty$.  Moreover, $[\bg]\neq 0$ in $\HF_*^{(a,\,b)}(H)$.
Acting on $[\bg]$ by $\alpha$ and applying \eqref{eq:action} with
$\beta*\alpha=q[M]$, we see that $\Phi_\alpha([\bg])\neq 0$ in
$\HF_*^{(-\lambda_0,\,0)}(H)$.  Thus $\bg$ is connected by a Floer
trajectory $u$ to some orbit $\by$ with action in the range
$(-\lambda_0,\,0)$. This is impossible since
$E(u)=|\CA_H(\by)|<c_\infty$.

Now, if we knew that, as in Remark \ref{rmk:growth}, the energy
$c_\infty$ goes to infinity as $\kappa$ grows, we could make
$c_\infty$ greater than $\lambda_0$ by passing to an iteration.  This
would prove that there are periodic orbits entering an arbitrarily
small neighborhood of $\gamma$ -- an assertion much stronger than the
theorem. Of course, we do not know weather or not $c_\infty\to\infty$
as $\kappa\to\infty$. In the proof, we circumvent this difficulty by
replacing $H$ with a carefully chosen iteration $H^{\nat \kappa}$
(using the condition that $N\geq n/2+1$) so that the above argument
still goes through even when $c_\infty<\lambda_0$; see Lemma
\ref{lemma:closed}.

\subsection{Proof of Theorem \ref{thm:main}}

Arguing by contradiction, let us assume that $\varphi_H$ has finitely
many periodic orbits. In this case, by replacing $H$ with its
iteration, we can also assume that $\gamma$ is a one-periodic orbit
and that $H$ is perfect in the terminology of \cite{GK}: all periodic
orbits of $\varphi_H$ are fixed points. Furthermore, again by passing
if necessary to an iteration, we can guarantee that the mean index of
$\gamma$ (equal to the Conley--Zehnder of $\gamma$) with respect to
any capping is divisible by $2N$. Then there exists a capping such
that the mean index is zero. Let $\bg$ be the orbit $\gamma$ equipped
with this capping: $\Delta(\bg)=0$.  We keep the notation $H$ for this
iterated Hamiltonian and assume it to be one-periodic in time, which
can always be achieved by a reparametrization. Finally, by adding a
constant to $H$, we can ensure that $\CA_H(\bg)=0$.

Fix a one-periodic in time almost complex structure $J^0$. Let $U$ be
a small closed neighborhood of $\gamma$ such
that no periodic orbit of $H$ other than $\gamma$ intersects $U$. By
Theorem \ref{thm:energy} applied to $U$, there exists a constant
$c_\infty>0$ such that, for any $\ka$, every non-trivial
$\ka$-periodic solution of the Floer equation for the pair $(H,J^0)$
asymptotic to $\gamma^\ka$ as $s\to\pm\infty$ has energy greater than~$c_\infty$. 

Denote by $a_i\in S^1_{\lambda_0}$ the actions of one-periodic orbits
of $H$ taken up to $\lambda_0$, and hence independent of
capping.  Let also $\ta_i\in\R$ stand for the
augmented actions of one-periodic orbits. Due to the assumption
that $H$ has finitely many fixed points, the collections
$\{a_i\}$ and $\{\ta_i\}$ are finite. 

Furthermore, fix a large constant
$C>0$ and a small constant $\eps>0$. The values of these constants are to be
specified later; see \eqref{eq:C} and \eqref{eq:eps-delta}.

As is easy to show using the Kronecker theorem and
\eqref{eq:aug-action-hom}, there exists an integer period $\ka >0$ such
that for all $i$
\begin{equation}
\label{eq:T}
\| \ka\cdot a_i\|_{\lambda_0}<\eps
\textrm{ and either }\ta_i=0\textrm{ or
}|\ka\cdot \ta_i|>C.
\end{equation}
Here $\|a\|_{\lambda_0}\in [0,\lambda_0/2]$ stands for the distance
from $a\in S^1_{\lambda_0}$ to $0$.

Let $K$ be a $\ka$-periodic Hamiltonian sufficiently $C^2$-close to
$H^{\nat \ka}$. Denote by $\PP$ the collection of contractible
$\ka$-periodic orbits of $K$ and by $\bar{\PP}$ the collection of
capped $\ka$-periodic orbits. Then \eqref{eq:T} readily implies that
for every $x\in\PP$ we have
\begin{gather}
\label{eq:K1}
\|\CA_K(x)\|_{\lambda_0}<\eps\textrm{ and }\\ 
\label{eq:K2}
\textrm{ either }|\tA_K(x)|<\delta
\textrm{ or
}|\tA_K(x)|>C,
\end{gather}
where $\delta>0$, to be specified later (see \eqref{eq:eps-delta}),
can be made arbitrarily small. In particular, by \eqref{eq:K1},
\emph{$\CS(K)$ is contained in the $\eps$-neighborhood of
  $\lambda_0\Z$}.

On the other hand, assume that $K$ is non-degenerate (and again
$\ka$-periodic) and equal to $H^{\nat \ka}$ on $U$. (We do not need
$K$ to be $C^2$-close to $H^{\nat \ka}$.) Then for any $\ka$-periodic
almost complex structure $J$, which is sufficiently close to (the
$\ka$-periodic extension of) $J^0$, all non-trivial $\ka$-periodic
solutions of the Floer equation for the pair $(K,J)$ asymptotic to
$\gamma^\ka$ as $s\to\pm\infty$ have energy greater than $c_\infty$.
This follows from the ``moreover'' part of Theorem \ref{thm:energy}
(see Remark \ref{rmk:energy}) or, alternatively, can be easily
established as a consequence of the compactness theorem for solutions
of the Floer equation for a fixed period, see, e.g., \cite[Corollary
3.4]{Sa}.

Let us now fix a $\ka$-periodic Hamiltonian $K$ meeting all of the
above conditions: $K$ is non-degenerate, sufficiently $C^2$-close to
$H^{\nat \ka}$, and equal to $H^{\nat \ka}$ on $U$. (When $H^{\nat
  \kappa}$ is non-degenerate, we can take $K=H^{\nat \kappa}$.)
Furthermore, fix a $\ka$-periodic almost complex structure $J$ such that
the regularity requirements for the pair $(K,J)$ are satisfied, and
every non-trivial solution of the Floer equation for $(K,J)$
asymptotic to $\gamma^\ka$ as $s\to\pm\infty$ has energy greater than
$c_\infty$. In particular, as a consequence of the regularity, the
filtered Floer complex $\CF_*(K)$ for the pair $(K,J)$ is defined.

Before proceeding with the proof, let us spell out, in the logical
order, the sequence of choices made above. Once $H$ has been made
perfect, we start by fixing $U$ and an
almost complex structure $J^0$. These choices determine 
$c_\infty$. Next we choose a large constant $C>0$ and small constants
$\eps>0$ and $\delta>0$; we will explicitly specify our requirements
on these constants shortly. Then we pick $\ka$ to satisfy
\eqref{eq:T}, and then the Hamiltonian $K$ meeting several conditions
including \eqref{eq:K1} and \eqref{eq:K2}. Finally, we fix the almost
complex structure $J$.

In the rest of the proof, the exact restrictions on the constants $C$,
$\eps$ and $\delta$ are essential. The constant $C$ is chosen so that
\begin{equation}
\label{eq:C}
C>2n\lambda+\nu\lambda_0.
\end{equation}
The constants $\eps$ and $\delta$ are positive, and 
\begin{equation}
\label{eq:eps-delta}
\eps<c_\infty\textrm{ and } 2(\eps+\delta)<\lambda.
\end{equation}
Note that $\kappa$ depends on $\eps$ (and of course $C$), which in
turn depends on the choice of $c_\infty$ and ultimately on the
choice of $U$.

From now on, we work exclusively with the Hamiltonian $K$, its
$\ka$-periodic orbits, and the Floer equation for $(K,J)$.  To simplify
the notation, we write $\gamma$ and $\bg$ in place of $\gamma^\ka$
and $\bg^\ka$. Our ultimate goal is to show that there exists an orbit
$\by$ of $K$ with action not in the $\eps$-neighborhood of
$\lambda_0\Z$, which is impossible by \eqref{eq:K1}.  This will
complete the proof of the theorem.

\begin{Lemma}
\label{lemma:closed}
Let $C'=C-\lambda(n+1)/2$. The orbit $\bg$ is not connected by a
solution of the Floer equation of relative index $\pm 1$ to any
$\bx\in \bPP$ with action in $(-C',\,C')$. In particular,
$\bg$ is closed in $\CF_*^{(-C',\,C')}(K)$ and $[\bg]\neq 0$ in
$\HF_*^{(-C',\,C')}(K)$.
Moreover, $\bg$ must enter every cycle representing $[\bg]$ in
$\HF_*^{(-C',\,C')}(K)$.
\end{Lemma}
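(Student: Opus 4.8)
The plan is to prove the first assertion of the lemma — that no solution of the Floer equation for $(K,J)$ of relative index $\pm1$ joins $\bg$ to a capped orbit $\bx\in\bPP$ with $\CA_K(\bx)\in(-C',\,C')$ — and then to read off the three homological statements from it. Arguing by contradiction, let $u$ be such a solution. By the symmetry between trajectories approaching and leaving $\gamma$ (Remark \ref{rmk:energy}) I may assume $u\in\CM(\bg,\bx)$, so that $E(u)=\CA_K(\bg)-\CA_K(\bx)=-\CA_K(\bx)$, and $E(u)>0$ because the relative index is nonzero. Since the relative index $|\bg|-|\bx|$ is odd, whereas the relative index between $\bg$ and any recapping $\bg\#A$ of $\gamma$ equals $-I_{c_1}(A)$ and is therefore even, the orbit $x$ cannot be $\gamma$; hence $x$ is disjoint from $U$, the trajectory $u$ must cross $\p U$, and the energy bound for $(K,J)$ recorded just before the lemma (a consequence of Theorem \ref{thm:energy}) gives $E(u)>c_\infty$. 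Thus $|\CA_K(\bx)|>c_\infty>\eps$ by \eqref{eq:eps-delta}.

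The contradiction now comes from showing that $\CA_K(\bx)$ must instead lie within $\eps$ of $0$, via the arithmetic of the augmented action. Since $\gamma$ is hyperbolic with $\Delta(\bg)=0$, we have $\MUCZ(\bg)=\Delta(\bg)=0$ and $|\bg|=n$, so the relative index condition forces $|\MUCZ(\bx)|=1$, and then \eqref{eq:Delta-MUCZ} gives $|\Delta_K(\bx)|<n+1$. Consequently, using \eqref{eq:aug-action},
$$
|\tA_K(x)| \le |\CA_K(\bx)| + \tfrac{\lambda}{2}|\Delta_K(\bx)| < C' + \tfrac{\lambda(n+1)}{2} = C,
$$
so the dichotomy \eqref{eq:K2} forces $|\tA_K(x)|<\delta$. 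Writing $\CA_K(\bx)=m\lambda_0+\xi$ with $m\in\Z$ and $|\xi|<\eps$ (possible by \eqref{eq:K1}) and substituting into $\CA_K(\bx)=\tA_K(x)+\tfrac{\lambda}{2}\Delta_K(\bx)$, together with $\lambda_0=\lambda N$, we obtain $|\Delta_K(\bx)-2mN|<2(\eps+\delta)/\lambda<1$ by \eqref{eq:eps-delta}. Hence $|2mN|\le|\Delta_K(\bx)|+1<n+2\le 2N$, which forces $m=0$, and therefore $|\CA_K(\bx)|=|\xi|<\eps$ — contradicting $|\CA_K(\bx)|>\eps$. The case $u\in\CM(\bx,\bg)$ is handled by the same computation with the two ends interchanged. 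This establishes the first assertion. The step that truly carries the proof — and the one whose constants need the most care — is this balancing act: the ball-crossing energy $c_\infty$ pushes $\CA_K(\bx)$ away from $0$, while the dichotomy \eqref{eq:K2}, the index bound \eqref{eq:Delta-MUCZ}, and the hypothesis $N\ge n/2+1$ pull it back, and the choices \eqref{eq:C}, \eqref{eq:eps-delta} of $C,\eps,\delta$ are exactly what is needed to make the two incompatible.

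The remaining statements are then formal. The part of $\p\bg$ surviving in the quotient complex $\CF_*^{(-C',\,C')}(K)$ counts index-$1$ trajectories from $\bg$ to capped orbits of action in $(-C',\,0)$, all of which have multiplicity $0$ by the first assertion; hence $\bg$ is closed in $\CF_*^{(-C',\,C')}(K)$, and clearly $[\bg]\neq 0$ there would follow once $\bg$ is shown not to be a boundary. If $\bg=\p\eta$ with $\eta=\sum_i c_i\bx_i\in\CF_{n+1}^{(-C',\,C')}(K)$, then $\bg$ would occur in $\p\bx_i$ for some $i$ with $c_i\neq 0$, i.e.\ some $\bx_i$ (of action in $(0,\,C')$) would be joined to $\bg$ by an index-$1$ trajectory — again impossible; hence $[\bg]\neq0$ in $\HF_*^{(-C',\,C')}(K)$. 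By the same impossibility $\bg$ never appears in $\p\eta$, so over $\F=\Z_2$ the $\bg$-coefficient of any representative $\bg+\p\eta$ of $[\bg]$ equals $1$, i.e.\ $\bg$ enters every cycle representing $[\bg]$. (Throughout one assumes, harmlessly, that $\pm C'\notin\CS(K)$, which is possible since $\CS(K)$ is nowhere dense.)
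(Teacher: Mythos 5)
Your proof is correct and relies on exactly the same ingredients as the paper's: the ball-crossing energy bound from Theorem~\ref{thm:energy}, the augmented-action dichotomy \eqref{eq:K2}, the index inequality \eqref{eq:Delta-MUCZ}, and the choices of $C$, $\eps$, $\delta$ in \eqref{eq:C}, \eqref{eq:eps-delta}. The only difference is organizational: the paper splits directly on the dichotomy (in the $|\tA_K(x)|<\delta$ branch using $E(u)>c_\infty$ to force $|\CA_K(\bx)|\geq\lambda_0-\eps$, hence $|\Delta_K(\bx)|>2N-1$ and $|\MUCZ(\bx)|>1$, contradicting $\MUCZ(\bx)=\pm1$; in the $|\tA_K(x)|>C$ branch concluding the action escapes $(-C',C')$), whereas you first use $|\CA_K(\bx)|<C'$ together with $|\Delta_K(\bx)|<n+1$ to rule out the $|\tA_K(x)|>C$ branch, then run the arithmetic $\CA_K(\bx)=m\lambda_0+\xi$ to get $m=0$, so $|\CA_K(\bx)|<\eps$, contradicting the energy bound. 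These are the same estimates applied in the opposite direction, and both are valid. Your explicit justification of $x\neq\gamma$ via parity of the recapping index $I_{c_1}(A)\in 2N\Z$ is a clean way to fill in a point the paper leaves implicit, and the derivation of the three homological conclusions from the non-connection statement is standard and correctly carried out over $\Z_2$.
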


\begin{proof} Arguing by contradiction, assume that $\bx$ is connected
  to $\bg$ by a solution $u$ of the Floer equation of relative index
  $\pm 1$.  Thus $\MUCZ(\bx)=\pm 1$, and hence $|\Delta_K(\bx)|< n+1$
  by \eqref{eq:Delta-MUCZ} and also $x\neq \gamma$. By \eqref{eq:K2},
  either $|\tA_K(x)|<\delta$ or $|\tA_K(x)|>C$.

  Let us first consider the former case: $|\tA_K(x)|<\delta$. We have
  $E(u)>c_\infty>\eps$, and therefore 
$$
|\CA_K(\bx)|=E(u)>\lambda_0-\eps,
$$
by \eqref{eq:K1} and \eqref{eq:eps-delta}. It follows from the
condition $|\tA_K(x)|<\delta$ and the second inequality in
\eqref{eq:eps-delta} that
$$
|\Delta_K(\bx)|>\frac{2}{\lambda}(\lambda_0-\eps-\delta)=2N-
\frac{2(\eps+\delta)}{\lambda}>2N-1.
$$
Thus, using the requirement that $N\geq n/2+1$, we have
$$
|\MUCZ(\bx)|> 2N-1-n\geq n+2-1-n=1,
$$
which is impossible since $\MUCZ(\bx)=\pm 1$.

In the latter case, $|\tA_K(x)|>C$, we have
$$
|\CA_K(\bx)|>C-\frac{\lambda}{2}|\Delta_K(\bx)|>C-\frac{\lambda}{2}(n+1)=C',
$$
where we used \eqref{eq:Delta-MUCZ} in the last inequality. Hence the
orbit $\bx$ is outside  the action range $(-C',\, C')$.
\end{proof}

Since the Floer differential commutes with recapping, the same is true
for the orbit $q^\nu\bg=\bg\#(\nu A_0)$ with the shifted action range 
$(-C',\,C')-\nu\lambda_0$.

Let now $(a,\,b)$ be any open interval containing
$[-\nu\lambda_0,\,0]$ and contained in the intersection of the
intervals $(-C',\,C')$ and $(-C',\,C')-\nu\lambda_0$. Such an interval
exists since $-C'<-\lambda_0$ and $C'-\nu\lambda_0>0$ due to our
choice of $C$; see \eqref{eq:C}. Then the above assertions hold for
both capped orbits $\bg$ and $q^\nu\bg$ and the interval $(a,\,b)$:
these orbits are not connected by a Floer trajectory of relative index
$\pm 1$ to any capped orbit with action in this interval; the chains
$\bg$ and $q^\nu\bg$ are closed in $\CF_*^{(a,\,b)}(K)$; the capped
orbits must enter any representatives of the classes $[\bg]$ and,
respectively, $q^\nu[\bg]$ in $\HF_*^{(a,\,b)}(K)$; and, in
particular, these classes are both non-zero.

\begin{Lemma}
\label{lemma:orbits}
The Hamiltonian $K$ has a periodic orbit with action outside the
$\eps$-neighborhood of $\lambda_0\Z$.
\end{Lemma}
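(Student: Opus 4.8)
The plan is to feed the quantum relation \eqref{eq:main-hom-relation} into the associativity of the cap product \eqref{eq:action}, using Lemma \ref{lemma:closed} to constrain where the relevant Floer trajectories can go and Theorem \ref{thm:energy} to bound their energy from below.

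First I record two facts about the exponents. In both cases of Theorem \ref{thm:main} one has $\nu=1$: this is assumed in Case (i), while in Case (ii) the grading identity $|\alpha|+|\beta|=4n-2\nu N$ (from \eqref{eq:floer-qh}--\eqref{eq:qp}), together with $|\beta|\ge 0$ and $|\alpha|\ge 3n+1-2N$, forces $2N(\nu-1)\le n-1$, which is impossible for $\nu\ge 2$ since $N\ge n/2+1$. Also $|\alpha|\neq 2n-2N$, for otherwise $|\beta|=2n$, contradicting $|\beta|<2n$. Next, since $\Phi_{q[M]}=q\,\Phi_{[M]}=q\cdot\id$, chasing the associativity diagram \eqref{eq:ass} for $\beta*\alpha=q[M]$ over the interval $(a,b)$ shows that $\Phi_\beta\circ\Phi_\alpha$ carries $[\bg]\in\HF^{(a,b)}_*(K)$ to the class $[q\bg]\in\HF^{(a,b)}_{n-2N}(K)$ (up to the quotient--inclusion maps appearing in \eqref{eq:ass}); since $q\bg$ has action $-\lambda_0$ strictly inside $(a,b)$ and, by the assertions following Lemma \ref{lemma:closed}, represents a non-zero class there, we conclude $\Phi_\alpha([\bg])\neq 0$ in $\HF^{(a,b)}_{|\alpha|-n}(K)$.

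Now I would represent $\Phi_\alpha([\bg])$ by the cycle $\Phi_\sigma(\bg)=\sum_{\by}m(\bg,\by;\sigma)\,\by\in\CF^{(a,b)}_{|\alpha|-n}(K)$, where $\sigma$ is a generic cycle representing $\alpha$, so that $\codim\sigma=2n-|\alpha|$. Each summand $\by$ satisfies $\MUCZ(\by)=\MUCZ(\bg)-\codim\sigma=|\alpha|-2n$; since a recapping $\gamma\#(jA_0)$ of $\gamma$ has index $-2jN$ and $|\alpha|-2n=-2jN$ has no solution with $j\ge 1$ (it would force $j=1$ and $|\alpha|=2n-2N$, excluded), no summand $\by$ is a recapping of $\gamma$; as $\gamma$ is the only $\ka$-periodic orbit of $K$ contained in $U$, every such $\by$ lies outside $U$. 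Applying $\Phi_\tau$, with $\tau$ a generic cycle representing $\beta$, the cycle $\Phi_\tau\Phi_\sigma(\bg)$ represents the class $[q\bg]\in\HF^{(a,b)}_{n-2N}(K)$; as $q\bg$ enters every representative of this class (discussion following Lemma \ref{lemma:closed}), some summand $\by$ of $\Phi_\sigma(\bg)$ comes with a point-constrained Floer trajectory $u$ from $\bg$ to $\by$ and one, $v$, from $\by$ to $q\bg$. For these,
$$
E(u)=\CA_K(\bg)-\CA_K(\by)=-\CA_K(\by),\qquad E(v)=\CA_K(\by)-\CA_K(q\bg)=\CA_K(\by)+\lambda_0,
$$
so $E(u)+E(v)=\lambda_0$ and $\CA_K(\by)\in[-\lambda_0,0]$. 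Since $\by\notin U$, the trajectory $u$ leaves $\gamma$ across $\p U$ and $v$ approaches $\gamma$ across $\p U$; thus Theorem \ref{thm:energy}, applied to the almost complex structure $J$ (which is $C^\infty$-close to $J^0$; see Remark \ref{rmk:energy}), gives $E(u)>c_\infty$ and $E(v)>c_\infty$. With $\eps<c_\infty$ from \eqref{eq:eps-delta} this yields $\CA_K(\by)=-E(u)<-\eps$ and $\CA_K(\by)=E(v)-\lambda_0>-\lambda_0+\eps$, i.e.\ $\CA_K(\by)\in(-\lambda_0+\eps,-\eps)$; noting $\lambda_0=E(u)+E(v)>2c_\infty>2\eps$, this interval is disjoint from the $\eps$-neighborhood of $\lambda_0\Z$. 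Hence the underlying $\ka$-periodic orbit $y$ of $K$ has action outside the $\eps$-neighborhood of $\lambda_0\Z$, which proves the lemma.

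The step I expect to be the main obstacle is the first half: extracting $\Phi_\alpha([\bg])\neq 0$ from the associativity relation requires careful bookkeeping of the filtration shifts and quotient--inclusion maps in \eqref{eq:ass}, together with Lemma \ref{lemma:closed} to ensure that $q\bg$ is neither a boundary in $\CF^{(a,b)}_*(K)$ nor killed when the action window is widened. Inseparable from this is making sure the summand $\by$ that one finally selects genuinely lies outside $U$ — this is exactly where $\nu=1$, the constraint $|\alpha|\neq 2n-2N$ (and, in Case (ii), the inequality $|\alpha|\ge 3n+1-2N$), and the iteration-independent bound $c_\infty$ of Theorem \ref{thm:energy} are used.
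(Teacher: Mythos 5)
Your proof is correct and follows the paper's overall strategy --- act on $[\bg]$ by the cap product with $\alpha$ and $\beta$ using \eqref{eq:action} and Lemma~\ref{lemma:closed}, locate an intermediate orbit $\by$ connected to both $\bg$ and $q^\nu\bg$ by point-constrained Floer trajectories, and deploy the ball-crossing bound of Theorem~\ref{thm:energy} to squeeze $\CA_K(\by)$ into $(-\nu\lambda_0+\eps,-\eps)$ --- but it streamlines the paper's argument in two respects. First, you observe that $\nu=1$ holds in both cases of Theorem~\ref{thm:main}: in Case~(ii) the grading identity $|\alpha|+|\beta|=4n-2\nu N$, together with $|\beta|\geq 0$, $|\alpha|\geq 3n+1-2N$ and $N\geq n/2+1$, excludes $\nu\geq 2$. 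This dispenses with the paper's separate treatment of Case~(ii), which first establishes \eqref{eq:case-ii} for general $\nu$ and then invokes the augmented-action alternative \eqref{eq:K2} together with the mean-index bound \eqref{eq:Delta-MUCZ} to rule out $\CA_K(\by)$ landing near $\lambda_0\Z$. Second, you spell out the Conley--Zehnder index argument showing that $\by$ cannot be a recapping of $\gamma$ (since $\MUCZ(\by)=|\alpha|-2n$ is never $-2jN$ once $|\alpha|<2n$, $|\alpha|\neq 2n-2N$, and $N\geq n/2+1$ are taken into account), so that $y$ genuinely lies outside $U$ and Theorem~\ref{thm:energy} applies to both $u$ and $v$; the paper leaves this step implicit when asserting \eqref{eq:case-ii}. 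Both observations are correct, and they make the argument tighter and more self-contained than the paper's, while the underlying mechanism is the same.
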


The main theorem follows from this lemma. For the existence of such an
orbit contradicts \eqref{eq:K1}.

\begin{proof}
  Applying \eqref{eq:ass} and \eqref{eq:action}  to $[\bg]$ with $(c,\, c')=(a,\,
  b)$, we see that the image (under the quotient-inclusion map) in
  $\HF_*^{(a,\,b)}(K)$ of the class $\Phi_\alpha ([\bg])$ is non-zero;
  for $q^\nu[\bg]\neq 0$ in this homology group.

  Recall that $\alpha$ and $\beta$ are both
  ordinary homology classes and let, as in Section \ref{sec:cap-product},
  $\sigma$ and $\eta$ be generic cycles representing 
  $\alpha$ and, respectively, $\beta$. Then the chain
  $\Phi_\sigma(\bg)$ in $\CF_*^{(a,\,b)}(K)$ is also non-zero.
  Moreover, the chain $\Phi_\eta\Phi_\sigma(\bg)$ represents the class $q^\nu[\bg]$,
  and hence the orbit $q^\nu\bg$ enters this chain. As a consequence, there
  exists an orbit $\by$ in the chain $\Phi_\sigma(\bg)$ which is
  connected to both $\bg$ and $q^\nu\bg$ by Floer trajectories. 

Therefore,
\begin{equation}
\label{eq:case-ii}
-\eps> \CA_K(\by)>-\nu\lambda_0+\eps .
\end{equation}

When $\nu=1$, \eqref{eq:case-ii} turns into 
$-\eps> \CA_K(\by)>-\lambda_0+\eps$, which
concludes the proof of the lemma in Case (i).

To establish the lemma in Case (ii), we first observe that
$\MUCZ(\by)=|\alpha|-2n$, and thus $|\Delta_K(\by)|<4n$. (In fact,
$-3n<\Delta_K(\by)<n-1$.)  Next, as in the proof of Lemma
\ref{lemma:closed}, we consider two sub-cases: $|\tA_K(\by)|<\delta$
and $|\tA_K(\by)|>C$.

In the former case, we argue by contradiction. Assume that
$\CA_K(\by)$ is in the $\eps$-neighborhood of $\lambda_0\Z$. Then, by
\eqref{eq:case-ii}, $\CA_K(\by)<-\lambda_0+\eps$, and hence we obtain
using \eqref{eq:eps-delta} that
$$
\Delta_K(\by)<\frac{2}{\lambda}\big(-\lambda_0+\eps+\delta\big)
=-2N+\frac{2(\eps+\delta)}{\lambda}<-2N+1.
$$
Therefore,
$$
|\alpha|-2n=\MUCZ(\by)<-2N+1+n
$$
or, in other words,
$$
|\alpha|<3n+1-2N,
$$
which contradicts the assumption of the theorem that $|\alpha|\geq
3n+1-2N$. 

In the latter case where $|\tA_K(\by)|>C$, we have
$$
|\CA_K(\by)|>C-\frac{\lambda}{2}|\Delta_K(\by)|>C-2n\lambda>\nu\lambda_0
$$
by \eqref{eq:C}, which is impossible due to \eqref{eq:case-ii}.
\end{proof}

This concludes the proof of the theorem.

\begin{Remark}
\label{rmk:refinement}
It is not hard to see from the proof of Theorem \ref{thm:main} that in
Case (i) the condition that $\alpha$ and $\beta$ are ordinary homology
classes can be slightly relaxed and replaced, for instance, by the
requirement that $I_\omega(\alpha)=0$ and $I_\omega(\beta)\geq
-\lambda_0$.
\end{Remark}

\begin{Remark}[Proof in the case of non-contractible orbits]
\label{rmk:non-contr-pf}
When the orbit $\gamma$ is non-contractible, the above argument goes
through essentially word-for-word, requiring only minimal modifications
(in fact, simplifications) in the beginning of the proof. Indeed, let
$\zeta$ be the free homotopy class of $\gamma$.  Without loss of
generality, we may assume that $\ka\zeta\neq 0$ for all positive integers
$\ka$ or, alternatively, use Remark \ref{rmk:0vs-non0}. By passing if
necessary to the second iteration, we can also ensure that
$d\varphi_H$ at $T_{\gamma(0)}M$ has an even number of real eigenvalues in
$(-1,\,0)$. Then there exists a trivialization of $TM$ along $\gamma$
such that $\Delta_H(\gamma)=0=\MUCZ(\gamma)$. We take $z=\gamma$ with
this trivialization as a reference loop. In the notation and
conventions of Section \ref{sec:non-contr}, the rest of the proof
remains unchanged.
\end{Remark}


\begin{thebibliography}{CKRTZ}

\bibitem[BH]{BH}
B. Bramham, H. Hofer, First steps towards a symplectic dynamics,
Preprint 2011, arXiv:1102.3723.

\bibitem[CGG]{CGG} M. Chance, V.L. Ginzburg, B.Z. G\"urel,
  Action-index relations for perfect Hamiltonian diffeomorphisms,
  Preprint 2011, arXiv:1110.6728; to appear in \emph{J. Sympl.\ Geom.}

\bibitem[CKRTZ]{CKRTZ}
B. Collier, E. Kerman, B. Reiniger, B. Turmunkh, A. Zimmer
A symplectic proof of a theorem of Franks, Preprint 2011,
arXiv:1107.1282; to appear in \emph{Compos.\ Math.} 

\bibitem[Fi]{Fi}
J.W. Fish, Target-local Gromov compactness,
\emph{Geom.\ Topol.}, \textbf{15} (2011), 765--826.

\bibitem[Fl]{Fl}
A. Floer,
Symplectic fixed points and holomorphic spheres, \emph{Comm.\ Math.\
Phys.}, \textbf{120} (1989), 575--611.

\bibitem[Fr1]{Fr1}
J. Franks, Geodesics on $S^2$ and periodic points of annulus
homeomorphisms, \emph{Invent.\ Math.}, \textbf{108} (1992), 403--418. 

\bibitem[Fr2]{Fr2} J. Franks, Area preserving homeomorphisms of open
  surfaces of genus zero, \emph{New York Jour.\ of Math.}, \textbf{2} (1996),
    1--19.

\bibitem[FH]{FH}
J. Franks, M. Handel,
Periodic points of Hamiltonian surface diffeomorphisms, \emph{Geom.\
  Topol.}, \textbf{7} (2003), 713--756.

\bibitem[Gi1]{Gi:Co}
V.L. Ginzburg,
Coisotropic intersections, \emph{Duke Math.\ J.}, \textbf{140} (2007),
111--163.

\bibitem[Gi2]{Gi:CC}
V.L. Ginzburg,
The Conley conjecture, \emph{Ann.\ of Math.} (2), \textbf{172} (2010), 1127--1180.

\bibitem[GG1]{GG:gaps}
V.L. Ginzburg, B.Z. G\"urel,
Action and index spectra and periodic orbits in Hamiltonian dynamics,
\emph{Geom.\ Topol.}, \textbf{13} (2009), 2745--2805.

\bibitem[GG2]{GG:generic}
V.L. Ginzburg, B.Z. G\"urel,
On the generic existence of periodic orbits in Hamiltonian dynamics,
\emph{J. Mod.\ Dyn.}, \textbf{4}
(2009), 595--610. 

\bibitem[GG3]{GG:neg-mon} 
V.L. Ginzburg, B.Z. G\"urel,
Conley conjecture for negative monotone symplectic manifolds,
\emph{Int.\ Math.\ Res.\ Not.\ IMRN}, 2011, doi:10.1093/imrn/rnr081.

 
\bibitem[GK]{GK}
V.L. Ginzburg, E. Kerman, 
Homological resonances for Hamiltonian diffeomorphisms and Reeb flows,
\emph{Int.\ Math.\ Res.\ Not.\ IMRN}, \textbf{2010}, 53--68. 

\bibitem[G\"u]{Gu}
B.Z. G\"urel, Periodic orbits of Hamiltonian systems linear and
hyperbolic at infinity, in preparation.

\bibitem[Ha]{Ha} 
S. Hayashi, Connecting invariant manifolds and the
  solution of the $C^1$-stability and $\Omega$-stability conjectures for
  flows, \emph{Ann.\ of Math.} (2), \textbf{145} (1997), 81--137.

\bibitem[He]{He:irr}
D. Hein,
The Conley conjecture for irrational symplectic manifolds, Preprint
2009, arXiv:0912.2064; to appear in \emph{J. Sympl.\ Geom.}

\bibitem[Hi]{Hi}
N. Hingston,
Subharmonic solutions of Hamiltonian equations on tori, 
 \emph{Ann.\ of Math.} (2), \textbf{170} (2009), 525--560.

\bibitem[HS]{HS}
H. Hofer, D. Salamon, 
Floer homology and Novikov rings, in \emph{The Floer Memorial Volume},
483--524, Progr.\ Math., 133, Birkh\"auser, Basel, 1995.

\bibitem[HZ]{HZ}
H. Hofer, E. Zehnder,
\emph{Symplectic Invariants and Hamiltonian Dynamics}, Birk\"auser,
1994.

\bibitem[Ke]{Ke:JMD}
E. Kerman,
On primes and period growth for Hamiltonian diffeomorphisms,
\emph{J. Mod.\ Dyn.}, \textbf{6} (2012), 41--58.

\bibitem[LO]{LO}
H.V. L\^{e}, K. Ono, 
Cup-length estimates for symplectic fixed points, in \emph{Contact and
Symplectic Geometry (Cambridge, 1994)}, 268--295, Publ.\ Newton Inst.,
8, Cambridge Univ.\ Press, Cambridge, 1996.

\bibitem[LeC]{LeC}
P. Le Calvez,
Periodic orbits of Hamiltonian homeomorphisms of surfaces,
\emph{Duke Math.\ J.}, \textbf{133} (2006),  125--184.

\bibitem[Lo]{Lo}
Y. Long,  \emph{Index Theory for Symplectic Paths with Applications}, 
Progress in Mathematics, 207. Birkh\"auser Verlag, Basel, 2002. 

\bibitem[MS]{MS}
D. McDuff, D. Salamon,
\emph{J-holomorphic Curves and Symplectic Topology}, Colloquium
publications, vol.\ 52, AMS, Providence, RI, 2004.

\bibitem[McL]{McL}
M. McLean, 	
Local Floer homology and infinitely many simple Reeb orbits, Preprint 2012,
arXiv:1202.0528.

\bibitem[On]{Ono:AC}
K. Ono, 
On the Arnold conjecture for weakly monotone symplectic manifolds,
\emph{Invent.\ Math.}, \textbf{119} (1995), 519--537. 

\bibitem[PSS]{PSS}
S. Piunikhin, D. Salamon, M. Schwarz,
Symplectic Floer--Donaldson theory and quantum cohomology, in
\emph{Contact and Symplectic Geometry} (Cambridge, 1994), 171--200;
C.B. Thomas (Ed.), Publ.\ Newton Inst., 8, Cambridge Univ.\ Press,
Cambridge, 1996.

\bibitem[PR]{PR}
C. Pugh, C. Robinson, The $C^1$ closing lemma, including Hamiltonians,
\emph{Ergodic Theory Dynam.\ Systems}, \textbf{3} (1983), 261--313.

\bibitem[Ru]{Ru} 
D. Ruelle, \emph{Elements of Differentiable Dynamics
    and Bifurcation Theory}, Academic Press, Inc., Boston, MA, 1989.

\bibitem[Sa]{Sa}
D.A. Salamon,
Lectures on Floer homology, in \emph{Symplectic Geometry and
Topology}, Eds: Y. Eliashberg and L. Traynor, IAS/Park City
Mathematics series, \textbf{7} (1999), pp.\ 143--230.

\bibitem[SZ]{SZ}
D. Salamon, E. Zehnder,
Morse theory for periodic solutions of Hamiltonian systems and the
Maslov index, \emph{Comm.\ Pure Appl.\ Math.}, \textbf{45} (1992),
1303--1360.

\bibitem[Us]{U:product}
M. Usher, 
Deformed Hamiltonian Floer theory, capacity estimates, and Calabi
quasimorphisms, \emph{Geom.\ Topol.}, \textbf{15} (2011), 1313--1417.

\bibitem[Xi]{Xi}
Z. Xia, 
Homoclinic points in symplectic and volume-preserving diffeomorphisms,
\emph{Comm.\ Math.\ Phys.}, \textbf{177} (1996), 435--449. 

\bibitem[Ze]{Ze} E. Zehnder, \emph{Lectures on Dynamical
    Systems. Hamiltonian Vector Fields and Symplectic Capacities} EMS
    Textbooks in Mathematics. European Mathematical Society (EMS),
    Z\"urich, 2010.

\end{thebibliography}
\end{document}